\newtheorem{theorem}{Theorem}[section]
\newtheorem{lemma}[theorem]{Lemma}
\newtheorem{definition}[theorem]{Definition}
\newtheorem{corollary}[theorem]{Corollary}
\newcommand{\sect}[1]{\section{#1} \setcounter{equation}{0} }
\newcounter{ca}
\newcommand{\norm}[2]{\left\|#1\right\|_{#2}}
\newcommand{\ds}{\displaystyle}
 \newcommand{\ec}{\end{comment}}
\newcommand{\bc}{ \begin{comment}
 }
 \newcommand\uw{\mathrm{w}}
 \newcommand{\omx}{\lambda_{x}}
\newcommand{\LL}{{\mathcal L}}
\newcommand{\A}{{\mathcal A}}
\newcommand{\DD}{{\mathcal D}}
\newcommand{\Q}{{\mathcal Q}}
\newcommand{\andd}{\quad\mbox{\rm and}\quad}
\newcommand\w{{\omega}}
\def\be  {\begin{equation}}
\def\ee  {\end{equation}}
\def\ba  {\begin{eqnarray}}
\def\ea  {\end{eqnarray}}
\def\baa {\begin{eqnarray*}}
\def\eaa {\end{eqnarray*}}
\newenvironment{comment}[2]
{\bgroup\vspace{7pt}
\begin{tabular}{|p{5in}|}
\hline \qquad \bf \footnotesize Comment -- to be deleted in the final version \\
\hline
\quad\sl\footnotesize #1#2} {\\ \hline \end{tabular}
\vspace{7pt}\indent\egroup}
\def\updots{\mathinner{\mkern
1mu\raise 1pt \hbox{.}\mkern 2mu \mkern 2mu \raise
4pt\hbox{.}\mkern 1mu \raise 7pt\vbox {\kern 7 pt\hbox{.}}} }
\def \dist{\mathop{\rm dist}\nolimits}
\def \const{\mathop{\rm const}\nolimits}
\newcommand{\R}{\mathbb R}
\newcommand{\N}{\mathbb N}
\newcommand{\ineq}[1]{{\rm(\ref{#1})}}
\newcommand{\ie}{{\em i.e., }}
\newcommand{\eg}{{\em e.g. }}
\newcommand{\st}{\;\; \big| \;\;}
\newcommand{\thm}[1]{Theorem~\ref{#1}}
\newcommand{\lem}[1]{Lemma~\ref{#1}}
\newcommand{\cor}[1]{Corollary~\ref{#1}}
\title{{\sc On  one  estimate  of divided differences and its  applications 
}\thanks{{\it AMS classification:} 41A10, 41A25. {\it Keywords
and phrases:} divided difference, Whitney, Marchaud, Lagrange, Hermite, Dzyadyk,  interpolation, trace, extension, modulus of smoothness }}
\author{K. A. Kopotun\thanks
{Department of Mathematics, University of Manitoba,Winnipeg, Manitoba, R3T 2N2, Canada ({\tt
kopotunk@cc.umanitoba.ca}). Supported by NSERC of Canada.}\ \
D. Leviatan\thanks{Raymond and Beverly Sackler School of Mathematical
Sciences, Tel Aviv University, Tel Aviv 6139001, Israel ({\tt
leviatan@tauex.tau.ac.il}).}\ \
and \ I. A. Shevchuk\thanks
{Faculty of Mechanics and Mathematics, Taras
Shevchenko National University of Kyiv, 01601 Kyiv, Ukraine ({\tt
shevchuk@univ.kiev.ua}).}
}
\begin{document}

\maketitle

\abstract{We give an estimate of the general divided differences   $[x_0,\dots,x_m;f]$, where some of the $x_i$'s are allowed to coalesce (in which case, $f$ is assumed to be sufficiently smooth).  This estimate is then applied to significantly strengthen  Whitney  and Marchaud celebrated inequalities in relation to Hermite interpolation.  

For example, one of the numerous  corollaries of this estimate is the fact that, given a   function $f\in C^{(r)}(I)$ and a set $Z=\{z_j\}_{j=0}^\mu$ such that
$z_{j+1}-z_j \geq \lambda |I|$,  for all $0\le j \le \mu-1$, where $I:=[z_0, z_\mu]$, $|I|$ is the length of $I$ and $\lambda$ is some positive number,
 the Hermite polynomial $\LL(\cdot;f;Z)$ of degree $\le r\mu+\mu+r$ satisfying
$\LL^{(j)}(z_\nu; f;Z) = f^{(j)}(z_\nu)$, for all  $0\le \nu \le \mu$ and  $0\le j\le r$, approximates $f$ so that, for all $x\in I$,
\[
\big|f(x)-  \LL(x;f;Z) \big| \le C  \left( \dist(x, Z) \right)^{r+1} \int_{\dist(x, Z)}^{2|I|}\frac{\w_{m-r}(f^{(r)},t,I)}{t^2}dt ,
\]
where  $m :=(r+1)(\mu+1)$, $C=C(m, \lambda)$  and $\dist(x, Z) :=  \min_{0\le j \le \mu} |x-z_j|$.

\begin{center}
{\bf Абстракт}

\end{center}
Ми даємо оцінку узагальненої розділеної різниці $[x_0,\dots,x_m;f]$, де деякі з точок $x_i$ можуть співпадати  (в цьому випадку $f$ вважається досить гладкою). Ця оцінка потім застосовується для суттєвого посилення  відомих нерівностей Уітні  і Маршу та узагальнює їх для поліноміальної інтерполяції Ерміта.

Наприклад, одним з численних наслідків цієї оцінки є той факт, що для заданої функції $f\in C^{(r)}(I)$ та набору точок $Z=\{z_j\}_{j=0}^\mu$ таких, що
$z_{j+1}-z_j \geq \lambda |I|$,  для всіх $0\le j \le \mu-1$, де $I:=[z_0, z_\mu]$, $|I|$ є довжиною $I$ та  $\lambda$ є деяким додатнім числом,
поліном Ерміта
 $\LL(\cdot;f;Z)$ степеня $\le r\mu+\mu+r$, який задовольняє $\LL^{(j)}(z_\nu; f;Z) = f^{(j)}(z_\nu)$, для  $0\le \nu \le \mu$ та  $0\le j\le r$,
  наближує  $f$ так, що, для всіх $x\in I$,
\[
\big|f(x)-  \LL(x;f;Z) \big| \le C  \left( \dist(x, Z) \right)^{r+1} \int_{\dist(x, Z)}^{2|I|}\frac{\w_{m-r}(f^{(r)},t,I)}{t^2}dt ,
\]

де  $m :=(r+1)(\mu+1)$, $C=C(m, \lambda)$  та $\dist(x, Z) :=  \min_{0\le j \le \mu} |x-z_j|$.
}

\sect{Introduction}

V. K. Dzyadyk had a significant impact on
the theory of extension of functions, and we start this note with recalling three of his most significant results (in our opinion) in this direction.

First, in 1956 (see \cite{Dz56}), he solved a problem posed by S. M. Nikolskii on extending a   function $f\in{\rm Lip}_M(\alpha,p)$, $0<\alpha \le 1$, $p\ge 1$, on a finite interval $[a,b]$, to a function $F\in{\rm Lip}_{M_1}(\alpha,p)$ on the whole real line, \ie $F|_{[a,b]}=f$.

Then, in 1958 (see \cite{Dz58} or \cite{Dz}*{p. 171-172}),   he showed that if  $f\in C[0,1]$  then this function may be extended to a  function $F\in C[-1,1]$ with a controlled second modulus of smoothness on $[-1,1]$, \ie
$F|_{[0,1]}=f$, and the
second moduli of smoothness of $f$ and $F$ satisfy $\omega_2(F,\delta;[-1,1])\le 5\omega_2(f,\delta; [0,1])$, $0<\delta\le1$. (This result was  independently  proved by Frey \cite{F}  the same year.)

In this note, we   mostly deal with  results related to Dzyadyk's third result which we will now describe.

Given a function $f\in C[a,b]$ and $a\le x_0<x_1<x_2\le b $, the second divided difference $[x_0,x_1,x_2;f]$   can be estimated as follows (see, \eg\cite{Dz}*{p. 176} and \cite{DS}*{p. 237}):
\be\label{0}
|[x_0,x_1,x_2;f]|\le\frac c{x_2-x_0}\int_h^{x_2-x_0}\frac{\omega_2(f,t)}{t^2}dt,
\ee
where $c=\const<18$,  $h:=\min\{x_1-x_0, x_2-x_1\}$.

Now, let $\omega_2$ be an arbitrary function of the second modulus of smoothness type, \ie $\omega_2\in C[0,\infty]$ is   nondecreasing   and such that $\omega_2(0)=0$ and $t_1^{-2}\omega_2(t_1)\le4t_2^{-2}\omega_2(t_2)$,  $0<t_2<t_1$.

In 1983, Dzyadyk and Shevchuk \cite{DS83} proved that if $f$, defined on an arbitrary set $E\subset\R$,  satisfies \ineq{0} with $\omega_2(t)$ instead of $\omega_2(f,t)$ for each triple  of points $x_0, x_1, x_2 \in E$ satisfying $x_0<x_1<x_2$, then $f$ may be extended from $E$ to a function $F\in C(\R)$  such that  $\omega_2(F,t;\mathbb R)\le c\omega_2(t)$. In other words, \ineq{0} with $\omega_2(t)$ instead of $\omega_2(f,t)$ is necessary and sufficient for a function $f$ to be the trace, on the set $E\subset\R$, of a function $F\in C(\R)$ satisfying $\omega_2(F,t;\R)\le c\omega_2(t)$.
This result was independently proved by Brudnyi and Shvartsman \cite{BS} in 1982 (see also Jonsson \cite{J} for $\omega_2(t)=t$).

V. K. Dzyadyk posed the question to describe such traces for functions of the $k$th modulus of smoothness type with $k>2$. He conjectured  that an analog of  \ineq{0}  must be a corollary of
 Whitney and Marchaud inequalities. In 1984, this conjecture was confirmed by Shevchuk in  \cite{Sh84pre}, and a corresponding (exact) analog of \ineq{0}  for $k>2$ was found (see \ineq{mainin} below with $r=0$). Earlier, the case $\omega(t)=t^{k-1}$ was proved by Jonsson whose paper \cite{J} was submitted in 1981, revised in 1983 and published in 1985.

So what happens when we have differentiable functions?
In 1934, Whitney \cite{W}  described the traces of $r$ times continuously differentiable functions $F:\R\mapsto\R$  on arbitrary closed sets $E\subset \R$: this trace consists of   all functions  $f: E\mapsto\R$ whose $r$th   differences converge on $E$ (see \cite{W-diff} for the definition).
 In 1975, de Boor \cite{dB75}  described the traces of functions $F:\R\mapsto\R$ with bounded $r$-th derivative on   arbitrary  sets $E\subset \mathbb{R}$ of isolated points: this trace consists of all functions whose $r$-th divided differences are uniformly bounded on $E$ (in 1965, Subbotin \cite{Su} obtained   exact constants in the case when sets $E$ consist  of   equidistant points).

Finally, given an arbitrary set $E\subset\R$,   the  necessary and sufficient condition for a function $f$ to be a trace (on $E$)   of a function $F\in C^{(r)}(\R)$ with a prescribed $k$-th modulus of continuity of the $r$-th derivative was obtained by Shevchuk in 1984 in \cite{Sh84pre}; see also
\cite{S}*{Theorems 11.1 and 12.3}, \cite{DS}*{Theorems 3.2 and 4.3 in Chapter 4}  and \cite{SZ}, where a linear extension operator
was given.

In fact, this necessary and sufficient condition is an analog of \ineq{0} for  the $k$-th modulus of continuity of the $r$-th derivative of $f$ which is inequality \ineq{mainin}
 in   \thm{main} below. However, the original proof of \thm{main} was distributed among several publications (see \cites{Sh84pre, Sh84, Galan} as well as \cite{S} and \cite{DS}), and there was an unfortunate misprint in the formulation of  \cite{DS}*{Theorem 6.4 in Section 3}: in (3.6.36), ``$k$'' was written instead of ``$m$''.
 Hence, the main purpose of this note is to properly formulate this theorem (\thm{main}), provide its complete self-contained proof and discuss several important corollaries/applications that have been inadvertently overlooked  in the past.

\sect{Definitions, notations and the main result}

For $f\in C[a,b]$ and any $k\in\N$, set
\[
\Delta^k_u(f,x;[a,b]):=\begin{cases}
  \sum_{i=0}^k(-1)^i\binom ki f(x+(k/2-i)u),&\quad x\pm (k/2)u\in[a,b],\\
0,&\quad{\rm otherwise},
\end{cases}
\]
and denote by
\be \label{modulus}
\w_k(f,t;[a,b]):=\sup_{0<u\le t}\|\Delta^k_u(f,\cdot;[a,b])\|_{C[a,b]}
\ee
the $k$th modulus of smoothness of $f$ on $[a,b]$.

Now, we recall the definition of Lagrange-Hermite divided differences (see \eg \cite{DL}*{p. 118}). Let $X = \{x_j\}_{j=0}^m$ be a collection of $m+1$ points with possible repetitions. For each $j$, the multiplicity $m_j$ of $x_j$ is the number of $x_i$ such that $x_i=x_j$, and let $l_j$ be the number of $x_i=x_j$ with $i\le j$. We say that a point $x_j$ is a simple knot if its multiplicity is $1$. Suppose that a real valued function $f$ is defined at all points in $X$ and, moreover, for each $x_j\in X$, $f^{(l_j-1)}(x_j)$ is defined as well (\ie $f$ has $m_j-1$ derivatives at each point that has multiplicity $m_j$).

Denote
\[
[x_0;f]:=f(x_0),
\]
the divided difference of $f$ of order $0$ at the point $x_0$.

\begin{definition} Let $m\in\mathbb{N}$.
 If $x_0=\dots=x_m$, then we denote
\[
[x_0, \dots, x_m; f] = [\underbrace{x_0, \dots, x_0}_{m+1};f]:=\frac{f^{(m)}(x_0)}{m!}.
\]
Otherwise,   $x_0\ne x_{j^*}$, for some number $j^*$, and we denote
\[
[x_0,\dots,x_m;f]:=\frac1{x_{j^*}-x_0} \left([x_1,\dots,x_{m};f]-[x_0,\dots,x_{j^*-1},x_{j^*+1},\dots,x_m;f]\right),
\]
the divided (Lagrange-Hermite) difference of $f$ of order $m$ at the knots $X = \{x_j\}_{j=0}^m$.
\end{definition}

Note that $[x_0,\dots,x_m;f]$ is symmetric in $x_0, \dots, x_m$ (\ie it does not depend on how the points from $X$ are numbered), and recall that
\begin{align} \label{hermite}
L_m(x; f) & := L_m(x; f; x_0, \dots, x_m) \\ \nonumber
& := f(x_0)+  \sum_{j=1}^m [x_0, \dots, x_j; f](x-x_0)\dots (x-x_{j-1})
\end{align}
is the (Hermite) polynomial of degree $\le m$ that satisfies
\be \label{intcond}
L_m^{(l_j-1)}(x_j; f) = f^{(l_j-1)}(x_j) , \quad \text{for all }\;   0\le j\le m .
\ee

Hence, in particular, if $x_{j_*}$ is a simple knot, then we can write
\be \label{diffwh}
[x_0,\dots,x_m;f]:=\frac{f(x_{j_*})-L_{m-1}(x_{j_*};f;x_0,\dots,x_{j_*-1},x_{j_*+1},\dots,x_m)}{\prod_{j=0,j\ne j_*}^m(x_{j_*}-x_j)} .
\ee

From now on, for convenience, we assume that all interpolation points are numbered from left to right, \ie the set of interpolation points $X = \{x_j\}_{j=0}^m$ is such that  $x_0\le x_1 \le \dots \le x_m$. We also assume 
that
the maximum multiplicity of each point is $r+1$ with $r\in\N_0$,  so that
\be \label{multipl}
x_j < x_{j+r+1}, \quad \text{for all} \quad 0\le j \le m-r-1 .
\ee
Also, let
\begin{align} \label{setpq}
\Q_{m,r}  := & \left\{ (p,q) \st 0\le p, q \le m \andd q-p\ge r+1 \right\}   \\ \nonumber
  =& \left\{ (p,q) \st 0\le p \le m-r-1 \andd p+r+1\le q \le m \right\} ,
\end{align}
and note that $\Q_{m,r} = \emptyset$ if $m\le r$.

Now, for all $(p,q)\in\Q_{m,r}$, put
\[
d(p,q):=  d(p,q; X)    :=\min\{x_{q+1}-x_p,x_{q}-x_{p-1}\},
\]
where $x_{-1}:=x_0-(x_m-x_0)$ and $x_{m+1}:=x_m+(x_m-x_0)$. Note, in particular, that
\[
d:= d(X) :=  d(0,m; X)    =2(x_m-x_0).
\]

Everywhere below, $\Phi$ is the set of nondecreasing functions $\varphi \in C[0,\infty]$  satisfying $\varphi(0)=0$. We also denote
\[
\Lambda_{p,q,r}(x_0,\dots,x_m;\varphi):=\frac{\ds \int_{x_q-x_p}^{d(p,q)}u^{p+r-q-1}\varphi(u)du}{\ds \prod_{i=0}^{p-1}(x_q-x_i)\prod_{i=q+1}^{m}(x_i-x_p)}, \quad (p,q)\in\Q_{m,r},
\]
and
\[
\Lambda_{r}(x_0,\dots,x_m;\varphi):=\max_{(p,q)\in\Q_{m,r}}  \Lambda_{p,q,r}(x_0,\dots,x_m;\varphi).
\]
Here, we use the usual convention that $\prod_{i=0}^{-1}:=1 $ and  $\prod_{i=m+1}^{m} := 1$.

The following theorem is the main result of this paper.

\begin{theorem}\label{main}  Let $r\in\N_0$ and $m\in\N$ be such that $m\ge r+1$, and suppose that a set
$X = \{x_j\}_{j=0}^{m}$ is such that  $x_0\le x_1 \le \dots \le x_m$ and \ineq{multipl} is satisfied.
If  $f\in C^{(r)}[x_0,x_m]$, then
\be\label{mainin}
\left|[x_0,\dots,x_m;f] \right|\le c\Lambda_{r}(x_0,\dots,x_m;\omega_{k}),
\ee
where $k:= m-r$ and  $\omega_{k}(t):=\omega_{k}(f^{(r)},t ; [x_0,x_m])$, and the constant $c$ depends only on $m$.
\end{theorem}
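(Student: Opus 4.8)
The plan is to induct on $k=m-r$, using the recurrence from the Definition together with Whitney's and Marchaud's inequalities. Throughout one has $x_0<x_m$, since otherwise some node would have multiplicity $\ge m+1>r+1$, contradicting \ineq{multipl}. The repeated nodes must be treated directly rather than by separating them into distinct points and passing to a limit, because the right-hand side of \ineq{mainin} need not remain bounded as nodes coalesce; this is where the hypothesis $f\in C^{(r)}[x_0,x_m]$ is used, via \ineq{diffwh} (or its confluent analogue) and Taylor expansions of $f^{(r)}$. The organizing remark is that the integrand exponent in $\Lambda_{p,q,r}$ equals $p+r-q-1=-(s+1)$ with $s:=q-p-r\in\{1,\dots,k\}$, so each $\Lambda_{p,q,r}(x_0,\dots,x_m;\omega_k)$ is exactly a Marchaud integral relating the $k$th modulus $\omega_k=\omega_k(f^{(r)},\cdot)$ to the $s$th modulus over the scale range $[x_q-x_p,\,d(p,q)]$, weighted by the leverage denominator $\prod_{i<p}(x_q-x_i)\prod_{i>q}(x_i-x_p)$ produced by the distant nodes.

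The base case is $k=1$, i.e.\ $m=r+1$, where $\Q_{m,r}=\{(0,m)\}$ and \ineq{mainin} reduces to $|[x_0,\dots,x_{r+1};f]|\le c\int_{x_m-x_0}^{2(x_m-x_0)}u^{-2}\omega_1(f^{(r)},u)\,du$. Subtracting the degree-$r$ Taylor polynomial of $f$ at $x_0$ (which the order-$(r+1)$ divided difference annihilates) and estimating the derivatives of the remainder by Taylor's theorem gives $|[x_0,\dots,x_{r+1};f]|\le c\,\omega_1(f^{(r)},x_m-x_0)/(x_m-x_0)$; since $\omega_1$ is nondecreasing, the latter is $\le c'\int_{x_m-x_0}^{2(x_m-x_0)}u^{-2}\omega_1\,du$, which is the claim.

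For the inductive step, $k\ge2$ and hence $m\ge r+2$, I would split off the outer nodes,
\[
[x_0,\dots,x_m;f]=\frac{[x_1,\dots,x_m;f]-[x_0,\dots,x_{m-1};f]}{x_m-x_0}.
\]
Each of the two divided differences on the right has order $m-1$, still satisfies \ineq{multipl} with the same $r$, and therefore obeys the inductive hypothesis with the lower modulus, yielding bounds of the form $c\,\Lambda_r(\,\cdot\,;\omega_{k-1})$ for the two sub-configurations. Applying Marchaud's inequality to raise $\omega_{k-1}$ to $\omega_k$ (permissible because $k-1<k$) converts each inherited integral into one of $\Lambda_r(\,\cdot\,;\omega_k)$ type, after which the extra factor $1/(x_m-x_0)$ must be distributed among the leverage denominators, and maximizing over pairs should reproduce $c\,\Lambda_r(x_0,\dots,x_m;\omega_k)$.

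The principal difficulty is this recombination. Restricting to $\{x_1,\dots,x_m\}$ and to $\{x_0,\dots,x_{m-1}\}$ shifts the indexing and, more importantly, changes the ghost nodes $x_{-1},x_{m+1}$ and hence the quantities $d(p,q)$, so the sub-configuration maxima are not literally subexpressions of the target $\Lambda_r$. I expect to need a case analysis according to which pair $(p,q)\in\Q_{m,r}$ realizes the maximum---distinguishing whether the active cluster $\{x_p,\dots,x_q\}$ abuts $x_0$, abuts $x_m$, or lies strictly inside---and, in each case, to verify that the factor $1/(x_m-x_0)$ pairs correctly with the denominators and that the Marchaud boundary term $\big((x_q-x_p)/d(p,q)\big)^{s}\omega_k(d(p,q))$ is absorbed by a neighbouring pair or by the top pair $(0,m)$. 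Keeping every constant dependent on $m$ alone, uniformly in the node spacings, through this bookkeeping is the delicate point.
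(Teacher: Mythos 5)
Your skeleton (induction on $k=m-r$, base case $k=1$ via Taylor/mean value, splitting off the outer nodes) matches the paper's, and your base case is essentially the paper's Lemma~\ref{k=1}. But the inductive step as you execute it has a genuine gap: you apply the induction hypothesis to $f$ itself, obtaining bounds of the form $c\,\Lambda_r(\cdot;\omega_{k-1})$ with $\omega_{k-1}=\omega_{k-1}(f^{(r)},\cdot)$, and then propose to ``raise $\omega_{k-1}$ to $\omega_k$'' by Marchaud. Marchaud's inequality cannot do this: it bounds $\omega_{k-1}(f^{(r)},t)$ by $c\,t^{k-1}\bigl(\int_t^{x_m-x_0} u^{-k}\omega_k(f^{(r)},u)\,du+\|f^{(r)}\|_{[x_0,x_m]}/(x_m-x_0)^{k-1}\bigr)$, and the norm term is not controlled by $\omega_k$ at all. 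Concretely, take $f$ a polynomial of degree exactly $m-1$, so that $f^{(r)}$ has degree $k-1$: then $[x_0,\dots,x_m;f]=0$ and $\omega_k(f^{(r)},\cdot)\equiv 0$, so the target right-hand side of \ineq{mainin} vanishes, yet $\omega_{k-1}(f^{(r)},\cdot)\not\equiv 0$ and your intermediate bound $c\,\Lambda_r(\cdot;\omega_{k-1})$ is strictly positive. Estimating the two sub-differences separately and only afterwards converting moduli destroys precisely the cancellation that makes the theorem true, so your ``Marchaud boundary term absorbed by a neighbouring pair or by the top pair $(0,m)$'' step cannot be carried out as stated --- the obstruction is not bookkeeping but a loss of information that no manipulation of the $\Lambda$'s can recover.

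The paper's fix is the step your opening sentence alludes to with ``Whitney'' but which you never perform: \emph{before} splitting, replace $f$ by $g$ with $g^{(r)}:=f^{(r)}-P_{k-1}$, where $P_{k-1}$ is the polynomial of best uniform approximation to $f^{(r)}$ of degree $\le k-1$. This leaves the divided difference unchanged, $[x_0,\dots,x_m;f]=[x_0,\dots,x_m;g]$, since the correction to $f$ is a polynomial of degree $\le m-1$, while Whitney's inequality gives $\|g^{(r)}\|_{[x_0,x_m]}\le c\,\omega_k(f^{(r)},x_m-x_0)$; the Marchaud norm term is thereby absorbed, yielding the pure integral bound \ineq{nr}, namely $\omega_{k-1}(g^{(r)},t)\le c\,t^{k-1}\int_t^{2(x_m-x_0)}u^{-k}\omega_k(f^{(r)},u)\,du$. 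The induction hypothesis is then applied to $g$, not to $f$. Your recombination worry (shifted ghost nodes, changed $d(p,q)$, distributing the factor $(x_m-x_0)^{-1}$) is legitimate and is exactly the content of the paper's Lemmas~\ref{lemma3} and \ref{lemma4}: a chain $(p_\nu,q_\nu)$ growing from $(p,q)$ to $(0,m)$ with doubling estimates on the leverage denominators, an integration by parts, and a four-case analysis over the maximizing pair. You correctly identified this as the delicate point, but it remains an unexecuted hope in your proposal, and without the Whitney correction the quantities it would have to compare are the wrong ones.
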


\sect{Auxiliary lemmas}

Throughout this section, we assume that $r\in\N_0$, $m\in\N$, $m\ge r+1$, the set $X = \{x_j\}_{j=0}^{m}$ is such that  $x_0\le x_1 \le \dots \le x_m$ and \ineq{multipl} is satisfied, and that $(p,q)\in\Q_{m,r}$. For convenience, we also denote $k:= m-r$.

 We first show that \thm{main} is valid in the case $m=r+1$ (\ie $k=1$).

\begin{lemma}\label{k=1} Theorem \ref{main} holds if $m=r+1$.

\begin{proof}
If  $m=r+1$, then $\Q_{m,r} = \{(0,r+1)\}$,
  and so
\[
\Lambda_{r}(x_0,\dots,x_m;\varphi) = \Lambda_{0,r+1,r}(x_0,\dots,x_m;\varphi) = \int_{d/2}^{d} u^{-2} \varphi(u) du.
\]
Hence, since $x_0\ne x_m$ by assumption \ineq{multipl},   \ineq{mainin}   follows from the identity
\begin{align*}
[x_0,\dots,x_m;f]=\frac{[x_1,\dots,x_{r+1};f]-[x_0,\dots,x_{r};f]}{x_m-x_0}
=\frac{f^{(r)}(\theta_1)-f^{(r)}(\theta_2)}{r! d/2},
\end{align*}
where $\theta_1\in(x_1,x_{r+1})$ and $\theta_2\in(x_0,x_{r})$, and the estimate
\[
\frac{|f^{(r)}(\theta_1)-f^{(r)}(\theta_2)|}{d} \le   \frac{\omega_1(d/2)}d \le \int_{d/2}^d\frac{\omega_1(u)}{u^2}dt=\Lambda_{r}(x_0,\dots,x_m;\omega_1). \quad \qed
\]
\renewcommand{\qedsymbol}{}
\end{proof}
\end{lemma}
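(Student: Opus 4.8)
The plan is to exploit the fact that the case $m=r+1$ is completely degenerate on the combinatorial side. Since membership in $\Q_{m,r}$ requires $q-p\ge r+1=m$ while $0\le p\le q\le m$, the only admissible pair is $(p,q)=(0,m)$, so $\Q_{m,r}=\{(0,m)\}$ and $\Lambda_r=\Lambda_{0,m,r}$. First I would simply read off what $\Lambda_{0,m,r}$ is: the empty-product convention makes its denominator $\prod_{i=0}^{-1}(x_m-x_i)\prod_{i=m+1}^{m}(x_i-x_0)=1$, the exponent is $p+r-q-1=-2$, and $d(0,m)=d=2(x_m-x_0)$ with lower limit $x_m-x_0=d/2$. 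Hence $\Lambda_r(x_0,\dots,x_m;\omega_1)=\int_{d/2}^{d}u^{-2}\omega_1(u)\,du$, where $\omega_1(t)=\omega_1(f^{(r)},t;[x_0,x_m])$. Note that \ineq{multipl} (taken with $j=0$) forces $x_0<x_{r+1}=x_m$, so $d>0$ and the expression is well defined.

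Next I would peel off one step of the recursive definition of the divided difference. Using $x_0\ne x_m$ I may take $j^*=m$ and write $[x_0,\dots,x_m;f]=\bigl([x_1,\dots,x_m;f]-[x_0,\dots,x_{m-1};f]\bigr)/(x_m-x_0)$. Because $m=r+1$, each of the two remaining differences is of order exactly $r$ at $r+1$ (possibly coalescing) knots lying in $[x_0,x_m]$, so the mean-value theorem for Hermite divided differences applies to each: $[x_1,\dots,x_m;f]=f^{(r)}(\theta_1)/r!$ for some $\theta_1\in[x_1,x_m]$, and $[x_0,\dots,x_{m-1};f]=f^{(r)}(\theta_2)/r!$ for some $\theta_2\in[x_0,x_{m-1}]$. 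Substituting and using $x_m-x_0=d/2$ yields the displayed identity, with $|[x_0,\dots,x_m;f]|=\dfrac{2}{r!}\cdot\dfrac{|f^{(r)}(\theta_1)-f^{(r)}(\theta_2)|}{d}$.

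The remaining step is purely a modulus estimate. Since $\theta_1,\theta_2\in[x_0,x_m]$ we have $|\theta_1-\theta_2|\le x_m-x_0=d/2$, so by the definition and monotonicity of the first modulus of smoothness $|f^{(r)}(\theta_1)-f^{(r)}(\theta_2)|\le\omega_1(d/2)$. Finally I would compare $\omega_1(d/2)/d$ with the target integral: because $\omega_1(u)\ge\omega_1(d/2)$ for $u\ge d/2$ and $\int_{d/2}^{d}u^{-2}\,du=1/d$, it follows that $\dfrac{\omega_1(d/2)}{d}\le\int_{d/2}^{d}\dfrac{\omega_1(u)}{u^2}\,du=\Lambda_r(x_0,\dots,x_m;\omega_1)$. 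Absorbing the factor $2/r!$ (which depends only on $m$ since $r=m-1$) into the constant $c$ gives \ineq{mainin}.

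Since every step above is forced, I do not anticipate a genuine obstacle; the only point deserving care is the mean-value theorem in the Hermite setting, where up to $r+1$ of the knots may coincide. In the fully coalesced subcase, for instance $x_1=\dots=x_m$, the difference $[x_1,\dots,x_m;f]$ is by definition $f^{(r)}(x_1)/r!$, so one simply takes the intermediate point to be the repeated knot; the crucial containment $\theta_1,\theta_2\in[x_0,x_m]$ and hence $|\theta_1-\theta_2|\le d/2$ persists regardless of multiplicities, so the final estimate is unaffected.
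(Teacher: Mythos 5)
Your proof is correct and follows essentially the same route as the paper: the same observation that $\Q_{m,r}=\{(0,m)\}$ reduces $\Lambda_r$ to $\int_{d/2}^{d}u^{-2}\omega_1(u)\,du$, the same one-step recursion of the divided difference combined with the mean value theorem for (Hermite) divided differences, and the same comparison of $\omega_1(d/2)/d$ with the integral. Your extra care in the fully coalesced subcase (taking $\theta_1,\theta_2$ in closed intervals, where the paper writes open ones) is a harmless and in fact slightly more precise refinement of the same argument.
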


For $k>2$,  we need the following lemma.
\begin{lemma}\label{lemma3} Let $(p,q)\in\Q_{m,r}$ be such that $q-p+2\le m$. If $\varphi\in\Phi$ and $\omega\in\Phi$ are such that 
\be\label{l2}
\varphi(t)\le t^{k-1}\int_t^{d}u^{-k}\omega(u)du,\quad t\in(0,d/2],
\ee
then
\be\label{l3}
\Lambda_{p,q,r}(x_0,\dots,x_{m};\varphi)\le 2^{k^2}\Lambda_{r}(x_0,\dots,x_{m};\omega) .
\ee
\end{lemma}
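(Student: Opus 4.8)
The plan is to bound the quotient $\Lambda_{p,q,r}(x_0,\dots,x_{m};\varphi)=N_{p,q}(\varphi)/D_{p,q}$, where I abbreviate $N_{p,q}(\psi):=\int_{x_q-x_p}^{d(p,q)}u^{p+r-q-1}\psi(u)\,du$ and $D_{p,q}:=\prod_{i=0}^{p-1}(x_q-x_i)\prod_{i=q+1}^{m}(x_i-x_p)$, and I write $\Lambda_{r}(\omega)$ for $\Lambda_{r}(x_0,\dots,x_m;\omega)$. Set $j:=q-p$, $s:=m-j$ (the number of factors of $D_{p,q}$), $a:=x_q-x_p$, $b:=d(p,q)$, and $\alpha:=m-j-2=s-2$; the hypothesis $q-p+2\le m$ is exactly $\alpha\ge 0$. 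The first task is to verify that $b\le d/2=x_m-x_0$, so that \ineq{l2} is applicable on the whole range $[a,b]$ of the integral $N_{p,q}(\varphi)$. This follows by a short case analysis forced by the hypothesis: if $q\le m-2$ then $b\le x_{q+1}-x_p\le x_{m-1}-x_0\le d/2$; if $q=m-1$ then $p\ge 1$ and $b\le x_m-x_p\le d/2$; if $q=m$ then $p\ge 2$ and $b\le x_m-x_{p-1}\le d/2$. Substituting \ineq{l2} into $N_{p,q}(\varphi)$ and interchanging the order of integration (legitimate since $\alpha\ge0$ lets me use $\int_a^v u^\alpha\,du\le v^{\alpha+1}/(\alpha+1)$) splits the estimate into a ``diagonal'' part over $[a,b]$ and a ``tail'' part over $[b,d]$.

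The diagonal part is immediate: the exponent after the swap satisfies $(\alpha+1)-k=p+r-q-1$, so the diagonal integral reproduces $N_{p,q}(\omega)$ and contributes at most $\tfrac{1}{\alpha+1}N_{p,q}(\omega)\le D_{p,q}\Lambda_{r}(\omega)$. Everything then reduces to controlling the tail functional
\[
\Theta(p,q):=\frac{b^{\alpha+1}}{\alpha+1}\int_{b}^{d}v^{-k}\omega(v)\,dv ,
\]
for which I will prove $\Theta(p,q)\le C_s\,D_{p,q}\Lambda_{r}(\omega)$ by downward induction on $s$.

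Two elementary geometric facts drive the induction. First, every factor of $D_{p,q}$ is $\ge b$ (because $x_q-x_i\ge x_q-x_{p-1}\ge b$ for $i\le p-1$ and $x_i-x_p\ge x_{q+1}-x_p\ge b$ for $i\ge q+1$), and at least one factor is $\ge d/4$, since $(x_q-x_0)+(x_m-x_p)=(x_m-x_0)+(x_q-x_p)\ge d/2$; hence $D_{p,q}\ge (d/4)b^{\alpha+1}$, which disposes of the top scale $[d/2,d]$ by comparison with $\Lambda_{0,m,r}(\omega)\le\Lambda_r(\omega)$. Second, $d(p,q)$ is realized either by $x_{q+1}-x_p$ or by $x_q-x_{p-1}$; the artificial endpoints $x_{-1},x_{m+1}$ are arranged so that at the boundary ($p=0$ or $q=m$) the minimum is the interior term, which means I may always widen to the neighboring pair $(p,q+1)$ or $(p-1,q)$ whose lower limit equals $b$. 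For that neighbor $(p',q')$ (width $j+1$, so exponent $p'+r-q'-1=r-j-2$) one checks $v^{-k}=v^{\,p'+r-q'-1}v^{-\alpha}\le b^{-\alpha}v^{\,p'+r-q'-1}$ on $[b,b']$ with $b':=d(p',q')$, and a one-line comparison of adjacent products gives each of the $\le k-1$ ratio factors bounded by $2$, whence $b\,D_{p',q'}\le 2^{\,k-1}D_{p,q}$. Splitting $\int_b^d=\int_b^{b'}+\int_{b'}^d$ then yields the recursion $C_s=2^{\,k-1}(1+C_{s-1})$, which telescopes over the $s\le k-1$ widening steps to $C_s\le 2^{(k-1)^2}$. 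Combining the diagonal bound with $\Theta(p,q)\le C_s D_{p,q}\Lambda_r(\omega)$ gives $N_{p,q}(\varphi)\le(1+C_s)D_{p,q}\Lambda_r(\omega)$, i.e. \ineq{l3} with the stated constant $2^{k^2}$.

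The main obstacle is the bookkeeping of this iteration rather than any single estimate: one must track the prefactors $b^{\alpha+1}/(\alpha+1)$ through each widening step so that the $b$'s cancel against the denominator comparison, and one must separately dispatch the degenerate cases $s=1$ (width $q-p=m-1$, where $\alpha=-1$ and the swapped inner integral is logarithmic) and the terminal pair $(0,m)$ (handled directly through $\Lambda_{0,m,r}$). These edge cases contribute only bounded factors and are absorbed when passing from $2^{(k-1)^2}$ to the clean bound $2^{k^2}$.
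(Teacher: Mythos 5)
Your proposal is correct and follows essentially the same route as the paper's proof: your widening of $(p,q)$ to the neighbor realizing $d(p,q)$ is exactly the paper's chain $(p_\nu,q_\nu)$, your bound $b\,D_{p',q'}\le 2^{k-1}D_{p,q}$ (each of at most $k-1$ adjacent-product ratios bounded by $2$, with the boundary conventions $x_{-1},x_{m+1}$ forcing the interior choice) is the paper's inequality \ineq{11}, and your diagonal/tail split via Fubini is the same computation as the paper's integration by parts. The only differences are organizational — you telescope through the recursion $C_s=2^{k-1}(1+C_{s-1})$ where the paper composes \ineq{11} into the closed-form product bound \ineq{12} and sums over the chain at once, and you dispatch the terminal scale $[d/2,d]$ via the observation $D_{p,q}\ge(d/4)\,b^{\alpha+1}$ and $\Lambda_{0,m,r}$ where the paper uses $d=2d_{m-q+p-1}$ — and these affect only bookkeeping, not the substance.
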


\begin{proof}  
Let $(p,q)\in\Q_{m,r}$ such that $q-p+2\le m$ be fixed, and  consider the   collection $\{(p_\nu,q_\nu)\}_{\nu=0}^{m-q+p}$ which we define as follows.
Let $(p_0,q_0):=(p,q)$, and for   $\nu \ge 1$,
\[
(p_\nu,q_\nu):=
\begin{cases}
(p_{\nu-1}-1,q_{\nu-1}),\quad &\text{if}\quad  x_{q_{\nu-1}}-x_{p_{\nu-1}-1}\le x_{q_{\nu-1}+1}-x_{p_{\nu-1}},\\
(p_{\nu-1},q_{\nu-1}+1),\quad &\text{otherwise}.
\end{cases}
\]
It is clear that $q_\nu-p_\nu = q_{\nu-1}-p_{\nu-1}+1$, and so
\be \label{auxid}
q_\nu-p_\nu = q-p +\nu ,
\ee
and one can easily check (for example, by induction) that, for all $1\le \nu \le m-q+p$,
\[
0\le p_\nu\le p_{\nu-1}< q_{\nu-1}\le q_{\nu}\le m.
\]
Hence, in particular,
\[
(p_{m-q+p},q_{m-q+p})=(0,m).
\]

In the rest of this proof, we use the notation
\[
d_\nu := d(p_\nu, q_\nu),  \quad 0\le\nu \le m-q+p.
\]

Also, observe that
\[
d_\nu  \ge d_{\nu-1}=x_{q_\nu}-x_{p_\nu}\quad1\le\nu\le m+q-p,
\]
and
\[
d_{m-q+p-1}=x_m-x_0 = d/2.
\]

We now show that, for all $1\le \nu \le m-q+p$,
\be\label{11}
\frac{d_{\nu-1} }{\ds \prod_{i=0}^{p_{\nu-1}-1}(x_{q_{\nu-1}}-x_i)\prod_{i=q_{\nu-1}+1}^{m}(x_i-x_{p_{\nu-1}})}\le \frac{2^k}{\ds \prod_{i=0}^{p_{\nu}-1}(x_{q_{\nu}}-x_i)\prod_{i=q_{\nu}+1}^{m}(x_i-x_{p_{\nu}})}.
\ee
Indeed, if $x_{q_{\nu-1}}-x_{p_{\nu-1}-1}\le x_{q_{\nu-1}+1}-x_{p_{\nu-1}}$, then $(p_{\nu},q_{\nu})=(p_{\nu-1}-1,q_{\nu-1})$, $d_{\nu-1}=x_{q_{\nu-1}}-x_{p_{\nu-1}-1}$ and, for $q_{\nu-1}+1  \le j \le m$,
\begin{align*}
x_j-x_{p_{\nu}} & =(x_j-x_{q_{\nu-1}})+(x_{q_{\nu-1}}-x_{p_{\nu-1}-1})\le(x_j-x_{p_{\nu-1}})+(x_{q_{\nu-1}+1}-x_{p_{\nu-1}}) \\
& \le 2(x_j-x_{p_{\nu-1}}),
\end{align*}
whence
\[
\prod_{i=q_{\nu-1}+1}^{m}(x_i-x_{p_{\nu-1}})\ge 2^{q_{\nu-1}-m}\prod_{i=q_{\nu}+1}^{m}(x_i-x_{p_{\nu}}),
\]
that yields \ineq{11}  because $m-q_{\nu-1}\le m-q \le  k$.

Similarly, if $x_{q_{\nu-1}}-x_{p_{\nu-1}-1} >  x_{q_{\nu-1}+1}-x_{p_{\nu-1}}$, then $(p_{\nu},q_{\nu})=(p_{\nu-1},q_{\nu-1}+1)$, $d_{\nu-1}= x_{q_{\nu-1}+1}-x_{p_{\nu-1}}$, and, for $0\le j\le p_{\nu-1}-1$,
\begin{align*}
x_{q_{\nu}}-x_j & = (x_{q_{\nu-1}+1}-x_{p_{\nu-1}}) + (x_{p_{\nu-1}}-x_j) \\
& < (x_{q_{\nu-1}}-x_{p_{\nu-1}-1}) + (x_{q_{\nu-1}}-x_j) \le 2 (x_{q_{\nu-1}}-x_j) ,
\end{align*}
and whence
\[
\prod_{i=0}^{p_{\nu-1}-1}(x_{q_{\nu-1}}-x_i) \geq 2^{-p_{\nu-1}} \prod_{i=0}^{p_{\nu}-1}(x_{q_{\nu}}-x_i) ,
\]
that also yields \ineq{11} because $p_{\nu-1}\le p < k$.

Inequality \ineq{11}   implies that, for all $1\le \nu \le m-q+p$,
\be\label{12}
\frac{ \ds \prod_{i=0}^{\nu-1} d_i}{\ds \prod_{i=0}^{p-1}(x_q-x_i)\prod_{i=q+1}^{m}(x_i-x_p)}\le \frac{2^{k\nu}}{\ds \prod_{i=0}^{p_\nu-1}(x_{q_\nu}-x_i)\prod_{i=q_\nu+1}^{m}(x_i-x_{p_\nu})}.
\ee

It is clear that $d(p,q) \le   x_m-x_0 = d/2$, and so
condition  \ineq{l2} implies that
\begin{align*}
\int_{x_q-x_p}^{d(p,q)}{u^{p+r-q-1}}{\varphi(u)}du\le\int_{x_q-x_p}^{d(p,q)}{u^{p+m-q-2}}\left(\int_{u}^{d}v^{-k}\omega(v)dv\right)du .
\end{align*}
Using integration by parts
we write
\begin{align*}
(m-q+p-1)& \int_{x_q-x_p}^{d(p,q)}{u^{p+r-q-1}}{\varphi(u)}du-\int_{x_q-x_p}^{d(p,q)}{u^{p+r-q-1}}{\omega(u)}du\\
&\le d^{m-q+p-1}(p,q)\int_{d(p,q)}^d\frac{\omega(u)}{u^{k}}du  \\
& =d^{m-q+p-1}(p,q)\sum_{\nu=1}^{m-q+p}\int_{d_{\nu-1} }^{d_\nu}\frac{\omega(u)}{u^{k}}du\\
&\le 2 \sum_{\nu=1}^{m-q+p} \;  \prod_{i=0}^{\nu-1} d_i
\int_{d_{\nu-1} }^{d_\nu }u^{p+r-q-1-\nu}\omega(u)du.
\end{align*}
The last estimate  is obvious for $1\le \nu \le m-q+p-1$ and, for $\mu = m-q+p$, it follows from
\[
d_0^{m-q+p-1} d_{m-q-p} \leq 2 \prod_{i=0}^{m-q+p-1} d_i
\]
which is valid because
\[
d_0^{m-q+p-1} \le \prod_{i=0}^{m-q+p-2} d_i \andd d_{m-q-p} = d(0,m) = d   = 2 d_{m-q+p-1}.
\]
Finally, taking into account \ineq{auxid}, \ineq{12} and recalling that $d_{\nu-1} = x_{q_\nu}-x_{p_\nu}$,  $1\le \nu\le m-q+p$,
we obtain
\begin{align*}
(m   -q+p-1) & \Lambda_{p,q,r}(x_0,\dots,x_{m};\varphi) \\
& \le \Lambda_{p,q,r}(x_0,\dots,x_{m};\omega) + 2\sum_{\nu=1}^{m-q+p}2^{k\nu}\Lambda_{p_\nu,q_\nu,r}(x_0,\dots,x_{m};\omega) \\
\end{align*}
that implies \ineq{l3}.
\end{proof}

\begin{lemma}\label{lemma4}   If $k=m-r\ge 2$ and
 $\varphi\in\Phi$ and $\omega\in\Phi$ are such that 
\be\label{2}
\varphi(t)\le t^{k-1}\int_t^{d}u^{-k}\omega(u)du,\quad t\in(0,d/2],
\ee
and $\varphi(t)\le \omega(t)$, $t\in[d/2,d]$, then
\be\label{3}
\Lambda_{r}(x_0,\dots,x_{m-1};\varphi)\le c(x_m-x_0)\Lambda_{r}(x_0,\dots,x_{m};\omega)
\ee
and
\be\label{4}
\Lambda_{r}(x_1,\dots,x_{m};\varphi)\le c(x_m-x_0)\Lambda_{r}(x_0,\dots,x_{m};\omega) ,
\ee
where constants $c$ depend only on $k$.
\end{lemma}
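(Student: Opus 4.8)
The plan is to prove \ineq{3} by estimating each term $\Lambda_{p,q,r}(x_0,\dots,x_{m-1};\varphi)$ with $(p,q)\in\Q_{m-1,r}$ and then taking the maximum, and to deduce \ineq{4} from \ineq{3} by the reflection $x\mapsto x_0+x_m-x$, which reverses the configuration while preserving the moduli $\omega_k$ and the quantity $\Lambda_r$ (the deletion of $x_0$ becomes the deletion of the largest point of the reflected set). The first step is an algebraic one: deleting $x_m$ removes exactly the factor $(x_m-x_p)$ from the denominator of $\Lambda_{p,q,r}$, so that
\[
\Lambda_{p,q,r}(x_0,\dots,x_{m-1};\varphi)=(x_m-x_p)\,\frac{\ds\int_{x_q-x_p}^{d'(p,q)}u^{p+r-q-1}\varphi(u)\,du}{\ds\prod_{i=0}^{p-1}(x_q-x_i)\prod_{i=q+1}^{m}(x_i-x_p)},
\]
where $d'(p,q)$ is the analogue of $d(p,q)$ computed for $\{x_0,\dots,x_{m-1}\}$, i.e. with the auxiliary endpoints $2x_0-x_{m-1}$ and $2x_{m-1}-x_0$ in place of $x_{-1}$ and $x_{m+1}$. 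Thus the right-hand side is $(x_m-x_p)$ times the full-set quantity $\Lambda_{p,q,r}(x_0,\dots,x_m;\varphi)$, except that the upper limit of integration is $d'(p,q)$ rather than $d(p,q)$.

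The proof then splits according to $q$. For interior pairs $q\le m-2$ one checks that $d'(p,q)\le d(p,q)$: the right endpoint is an honest data point and is unchanged, and the only endpoint that changes relevantly is the left one (when $p=0$), which can only shrink $d$. Hence the integral is bounded by its full-set version, so $\Lambda_{p,q,r}(x_0,\dots,x_{m-1};\varphi)\le(x_m-x_p)\Lambda_{p,q,r}(x_0,\dots,x_m;\varphi)$. Since $q\le m-2$ forces $q-p+2\le m$, \lem{lemma3} applies and yields $\le(x_m-x_0)2^{k^2}\Lambda_r(x_0,\dots,x_m;\omega)$, which is of the required form. The boundary pairs are those with $q=m-1$: here the auxiliary right endpoint $2x_{m-1}-x_0$ may lie to the right of $x_m$ when the last gap $x_m-x_{m-1}$ is small, so $d'(p,m-1)$ can exceed $d(p,m-1)$ and this monotonicity fails.

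For $1\le p\le k-2$ the pair $(p,m-1)$ still satisfies $q-p+2=m+1-p\le m$, so \lem{lemma3} controls the part of the integral up to $d(p,m-1)$, and it remains to estimate the tail $\int_{d(p,m-1)}^{d'(p,m-1)}u^{p+r-q-1}\varphi(u)\,du$. For the single pair $(0,m-1)$, which \lem{lemma3} excludes since then $q-p+2=m+1$, the whole quantity reduces to $\int_{d'/2}^{d'}u^{-k}\varphi(u)\,du$ with $d'=2(x_{m-1}-x_0)$ and must be handled directly. In both boundary cases I would estimate the offending integral by splitting at $d/2$: on $(0,d/2]$ I substitute the integral bound \ineq{2} on $\varphi$ and integrate, while on $[d/2,d]$ I use the second hypothesis $\varphi\le\omega$; the resulting expressions are dominated by $(x_m-x_0)\Lambda_{0,m,r}(x_0,\dots,x_m;\omega)\le(x_m-x_0)\Lambda_r(x_0,\dots,x_m;\omega)$, using $\Lambda_{0,m,r}(x_0,\dots,x_m;\omega)=\int_{d/2}^{d}u^{-k-1}\omega(u)\,du$.

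The main obstacle is precisely this boundary tail estimate. The mismatch between the auxiliary endpoints of the reduced and the full configurations forces one out of the range on which the single hypothesis \ineq{2} is valid, and compels combining both hypotheses on $\varphi$ across the threshold $d/2$. This is, in fact, the only place in the argument where the assumption $\varphi\le\omega$ on $[d/2,d]$ — which is absent from \lem{lemma3} — is genuinely used.
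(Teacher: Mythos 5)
Your architecture is essentially the paper's: the reflection $x\mapsto x_0+x_m-x$ to deduce \ineq{4} from \ineq{3}, the observation that deleting $x_m$ removes exactly the factor $(x_m-x_p)$ from the denominator, the split into interior pairs $q\le m-2$ (where indeed $d'(p,q)\le d(p,q)$ and \lem{lemma3} finishes, as in the paper's Case II) and boundary pairs $q=m-1$. The gap is in your boundary-tail estimate. For $q=m-1$ and $p\ge 1$ the tail $\ds\prod_{i=0}^{p-1}(x_{m-1}-x_i)^{-1}\int_{x_m-x_p}^{x_{m-1}-x_{p-1}}u^{p+r-m}\varphi(u)\,du$ is \emph{not} dominated by $(x_m-x_0)\Lambda_{0,m,r}(x_0,\dots,x_m;\omega)$: the latter has trivial denominator, while the tail inherits the possibly huge factor coming from small gaps near $x_m$. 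Concretely, take $r=0$, $m=k=3$, $x_0=0$, $x_1=1-2\eta$, $x_2=1-\eta$, $x_3=1$ (so $d=2$), $\omega(t)=t$, and $\varphi(t)=t-t^2/2$ on $[0,1]$, $\varphi\equiv 1/2$ on $[1,2]$, which realizes equality in \ineq{2}. For $(p,q)=(1,2)$ one has $d(1,2;X_m)=2\eta$ while the reduced-set upper limit is $x_2-x_0=1-\eta$, and the tail equals $\frac{1}{1-\eta}\int_{2\eta}^{1-\eta}\bigl(u^{-1}-\tfrac12\bigr)\,du\sim\ln(1/\eta)\to\infty$, whereas your claimed majorant $(x_3-x_0)\Lambda_{0,3,0}(X;\omega)=\int_1^2u^{-3}\,du=3/8$ stays bounded (substituting \ineq{2} into the integrand, as you propose, gives the same divergent integral, so that step does not rescue it). The lemma itself survives because $\Lambda_r(X;\omega)$ is attained at $(1,2)$ and is of size $\sim 1/\eta$; the tail must therefore be compared with $\Lambda_{p,m,r}$, whose denominator blows up correspondingly, not with the $(0,m)$ term. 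A secondary inaccuracy: for $p\ge1$ the tail's upper limit is $x_{m-1}-x_{p-1}\le x_m-x_0=d/2$, so the entire tail lies in $(0,d/2]$ and the hypothesis $\varphi\le\omega$ on $[d/2,d]$ is never used there; it is genuinely needed only for the pair $(0,m-1)$, whose upper limit $2(x_{m-1}-x_0)$ can exceed $d/2$ — so your closing diagnosis of where the second hypothesis enters is also off.

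The paper's repair of exactly this spot is worth internalizing. When $x_m-x_{p^*}\le x_{m-1}-x_{p^*-1}$ it converts the tail using $x_m-x_i\le 2(x_{m-1}-x_i)$ for $0\le i\le p^*-1$ and $u\le x_m-x_{p^*-1}$ into $2^{p^*}(x_m-x_{p^*-1})\Lambda_{p^*,m,r}(x_0,\dots,x_m;\varphi)$, and then applies \lem{lemma3} to the pair $(p^*,m)$ — which requires $m-p^*+2\le m$, i.e.\ $p^*\ge2$ (its Case III). This forces the single remaining pair $(p^*,q^*)=(1,m-1)$ with $x_m-x_1\le x_{m-1}-x_0$ to be treated by a bespoke direct computation (its Case IV): since the upper limit $x_{m-1}-x_0\le d/2$, one substitutes \ineq{2} into the whole integral and rearranges the resulting double integral into $(x_m-x_0)$ times a combination of $\Lambda_{0,m-1,r}$, $\Lambda_{0,m,r}$, $\Lambda_{1,m-1,r}$ and $\Lambda_{1,m,r}$ evaluated at $\omega$. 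Your proposal does not detect this $p=1$ obstruction at all. Finally, your $p=0$ case (the paper's Case I) does go through roughly as you sketch, but there too the correct majorant is $\Lambda_{0,m-1,r}(\omega)+2\Lambda_{0,m,r}(\omega)$: when $x_{m-1}-x_0\le d/4$ the term $\int_{x_{m-1}-x_0}^{d/2}u^{-k}\omega(u)\,du=(x_m-x_0)\Lambda_{0,m-1,r}(X;\omega)$ cannot in general be absorbed into $\Lambda_{0,m,r}$ alone.
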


\begin{proof}
We first note that \ineq{4} is a consequence of \ineq{3}. Indeed, given  $X=\{x_i\}_{i=0}^m$, define the set $Y=\{y_i\}_{i=0}^m$ by letting $y_i := - x_{m-i}$, $0\le  i\le m$.
Then, $y_0\le y_1 \le \dots \le y_m$, $y_m-y_0 = x_m-x_0$ (and so, in particular, $d(Y)=d(X)=d$),
\begin{align*}
d(p,q;Y) & = \min\{y_{q+1}-y_p,y_{q}-y_{p-1}\} = \min\{ x_{m-p}-x_{m-q-1}, x_{m-p+1}-x_{m-q} \} \\
& = d(m-q, m-p; X) = d(m-q, m-p)   ,
\end{align*}
 and it is not difficult to check that, for any $\psi\in\Phi$,
\begin{align*}
\Lambda_{p,q,r}(y_0,\dots,y_m;\psi)
= \Lambda_{m-q,m-p,r}(x_0,\dots,x_m;\psi)
\end{align*}
and 
\begin{align*}
\Lambda_{p,q,r}(y_0,\dots,y_{m-1};\psi)
= \Lambda_{m-q-1,m-p-1,r}(x_1,\dots,x_{m};\psi) .
\end{align*}
Hence, using the fact that $(p,q)\in \Q_{\mu,r}$ iff  $(\mu-q,\mu-p)\in \Q_{\mu,r}$, $\mu=m-1,m$   we have
\begin{align*}
\Lambda_{r}(x_0,\dots,x_{m};\omega) &= \max_{(p,q)\in\Q_{m,r}}  \Lambda_{p,q,r}(x_0,\dots,x_m;\omega) \\
& = \max_{(m-q,m-p)\in\Q_{m,r}}  \Lambda_{m-q,m-p,r}(y_0,\dots,y_m;\omega) \\
&= \Lambda_{r}(y_0,\dots,y_m;\omega)
\end{align*}
and
\begin{align*}
\Lambda_{r}(x_1,\dots,x_{m};\varphi) & =\max_{(p,q)\in\Q_{m-1,r}}  \Lambda_{p,q,r}(x_1,\dots,x_m;\varphi) \\
&= \max_{(m-q-1,m-p-1)\in\Q_{m-1,r}} \Lambda_{m-q-1,m-p-1,r}(y_0,\dots,y_{m-1};\varphi) \\
& = \Lambda_{r}(y_0,\dots,y_{m-1};\varphi),
\end{align*}
and so \ineq{4} follows from \ineq{3} applied to the set $Y$.

We are now ready to prove \ineq{3}.
Let $(p^*,q^*)\in\Q_{m-1,r}$ be such that
\[
\Lambda^*:=\Lambda_{p^*,q^*,r}(x_0,\dots,x_{m-1};\varphi)=\Lambda_{r}(x_0,\dots,x_{m-1};\varphi),
\]
and denote, for convenience, $X_m := \{x_0, \dots, x_m\}$ and $X_{m-1} := \{x_0, \dots, x_{m-1}\}$.

We consider four cases.

\medskip
{\bf Case I:}  $(p^*,q^*)=(0,m-1)$.

We put $h:=x_{m-1}-x_0$ and note that
$\ds\Lambda^*=\int_{h}^{2h}u^{-k}\varphi(u)du$.

If $h\le d/4$, then
\begin{align*}
2^{1-k}\Lambda^*& \le (2h)^{1-k}  \varphi(2h)              \le \int_{2h}^{d}u^{-k}\omega(u)du \le \int_{h}^{d/2}u^{-k}\omega(u)du+\int_{d/2}^{d}u^{-k}\omega(u)du\\
&\le
 \int_{h}^{d/2}u^{-k}\omega(u)du+d\int_{d/2}^{d}u^{-k-1}\omega(u)du\\
&=(x_m-x_0) \Big(\Lambda_{0,m-1,r}(x_0,\dots,x_{m};\omega)+2\Lambda_{0,m,r}(x_0,\dots,x_{m};\omega)\Big) \\
&\le 3(x_m-x_0) \Lambda_{r}(x_0,\dots,x_{m};\omega).
\end{align*}
If $h> d/4$, then
\begin{align*}
\Lambda^*& = \int_{h}^{d/2}{ u^{-k}}{\varphi(u)}du + \int_{d/2}^{2h}{ u^{-k}}{\varphi(u)}du\le\left(4/d\right)^{k-1}\varphi(d/2) +\int_{d/2}^{2h}{ u^{-k}}{\varphi(u)}du\\
&<4^k\int_{d/2}^{d}{ u^{-k}}{\varphi(u)}du\le4^k\int_{d/2}^{d}{ u^{-k}}{\omega(u)}du\le 4^kd\int_{d/2}^{d}{ u^{-k-1}}{\omega(u)}du\\
&=2\cdot4^k(x_m-x_0)\Lambda_{0,m,r}(x_0,\dots,x_{m};\omega) \le 2\cdot4^k(x_m-x_0)\Lambda_{r}(x_0,\dots,x_{m};\omega).
\end{align*}

\medskip
{\bf Case II:}   either (i) $q^*\ne m-1$, or (ii) $q^*=m-1$, $p^*>0$, and $x_m-x_{p^*}>x_{m-1}-x_{p^*-1}$

In this case,   $d(p^*, q^*; X_{m-1}) = d(p^*, q^*; X_{m})= x_{m-1}-x_{p^*-1}$, and so
\[
\Lambda^*=(x_m-x_{p^*})\Lambda_{p^*,q^*,r}(x_0,\dots,x_{m};\varphi)
\le(x_m-x_0)\Lambda_{p^*,q^*,r}(x_0,\dots,x_{m};\varphi).\\ 
\]
Since $q^*-p^*+2\le m$, we may apply \lem{lemma3} and obtain \ineq{3}.

\medskip
{\bf Case III:}   $q^*=m-1$, $p^*\ge 2$ and $x_m-x_{p^*}\le x_{m-1}-x_{p^*-1}$
\medskip

In this case,  $d(p^*, q^*; X_{m-1})  = x_{m-1}-x_{p^*-1}$ and $d(p^*, q^*; X_{m}) = x_m-x_{p^*}$.
Hence, taking into account that,
 for   $0\le i \le {p^*}-1$,
\[
x_m-x_i = x_m - x_{p^*} + x_{p^*}  -x_i \le x_{m-1 }- x_{{p^*}-1} + x_{p^*} - x_i
\le 2(x_{m-1}-x_i) ,
\]
we have
\begin{align*}
\Lambda_{{p^*},m-1,r}&(x_0,\dots,x_{m-1};\varphi)-(x_m-x_{p^*})\Lambda_{{p^*},m-1,r}(x_0,\dots,x_{m};\varphi)\\
&=
\prod_{i=0}^{{p^*}-1}(x_{m-1}-x_i)^{-1} \int_{x_{m}-x_{{p^*}}}^{x_{m-1}-x_{{p^*}-1}}{u^{{p^*}+r-m}}\varphi(u)du\\
&\le
2^{p^*}\prod_{i=0}^{{p^*}-1}(x_{m}-x_i)^{-1}
(x_{m}-x_{{p^*}-1})\int_{x_{m}-x_{{p^*}}}^{x_{m}-x_{{p^*}-1}}{u^{{p^*}+r-m-1}}\varphi(u)du \\
&=2^{p^*}(x_{m}-x_{{p^*}-1})\Lambda_{{p^*},m,r}(x_0,\dots,x_{m};\varphi).
\end{align*}
Since $m-p^*+2 \le m$, we may apply \lem{lemma3} to obtain \ineq{3}.

\medskip
{\bf Case IV:} $(p^*,q^*)=(1,m-1)$ and $x_m-x_1\le x_{m-1}-x_{0}$
\medskip

 In this case, we have
\begin{align*}
\Lambda^*=&\frac1{x_{m-1}-x_{0}}{\int_{x_{m-1}-x_1}^{x_{m-1}-x_0}u^{1-k}\varphi(u)du} \\
& \le\frac1{x_{m-1}-x_{0}}\int_{x_{m-1}-x_1}^{x_{m-1}-x_0}
\left(\int_u^dv^{-k}\omega(v)dv\right)du\\
\le&
\int_{x_{m-1}-x_0}^du^{-k}\omega(u)du + \frac1{x_{m-1}-x_{0}}\int_{x_{m-1}-x_1}^{x_{m-1}-x_0}u^{1-k}\omega(u)du  =: \A_1 + \A_2 .
\end{align*}
Now,
\begin{align*}
\A_1 
& \le
\int_{x_{m-1}-x_0}^{d/2}u^{-k}\omega(u)du    +   d \int_{d/2}^du^{-k-1}\omega(u)du \\
& =
(x_m-x_0) \Big( \Lambda_{0,m-1,r}(x_0,\dots,x_{m};\omega)  +   2\Lambda_{0,m,r}(x_0,\dots,x_{m};\omega) \Big) \\
&\le
3 (x_m-x_0) \Lambda_{r}(x_0,\dots,x_{m};\omega)
\end{align*}
and
\begin{align*}
\A_2 &=
 \frac1{x_{m-1}-x_{0}}\int_{x_{m-1}-x_1}^{x_{m}-x_1}u^{1-k}\omega(u)du
  + \frac1{x_{m-1}-x_{0}}\int_{x_{m}-x_1}^{x_{m-1}-x_0}u^{1-k}\omega(u)du\\
  & \le
  (x_m-x_1) \Lambda_{1, m-1, r}(x_0,\dots,x_{m};\omega)
  + \int_{x_{m}-x_1}^{x_{m-1}-x_0}u^{-k}\omega(u)du \\
  &\le
  (x_m-x_0) \Lambda_{1, m-1, r}(x_0,\dots,x_{m};\omega)
  + \int_{x_{m}-x_1}^{x_{m}-x_0}u^{-k}\omega(u)du \\
  & =
  (x_m-x_0) \Big( \Lambda_{1, m-1, r}(x_0,\dots,x_{m};\omega) + \Lambda_{1, m, r}(x_0,\dots,x_{m};\omega)  \Big) \\
  & \le 2 (x_m-x_0) \Lambda_{r}(x_0,\dots,x_{m};\omega).
\end{align*}

\end{proof}

\sect{Proof of Theorem \ref{main}}

\begin{proof}
We use induction on $k = m-r$. The base case $k=1$ is addressed in \lem{k=1}. Suppose now that $k\ge 2$ is given,
assume that   \thm{main} holds for   $k-1$ and prove it for   $k$.
 
Denote by $P_{k-1}$ the polynomial of best uniform approximation of $f^{(r)}$ on $[x_0,x_m]$ of degree at most $k-1$, and let $g$ be such that
\[
g^{(r)} :=f^{(r)} -P_{k-1}.
\]
Then
\[
\w_k(g^{(r)},t; [x_0,x_m]) = \w_k(f^{(r)},t; [x_0,x_m])   =: \w_k^f(t) ,
\]
and Whitney's inequality yields
\be \label{nr0}
\|g^{(r)}\|_{[x_0,x_m]}\le  c \w_k(f^{(r)} ,x_m-x_0; [x_0,x_m]) =  \w_k^f(x_m-x_0).
\ee
Hence,   the well known  Marchaud inequality:
\begin{quote}
 if $F\in C[a,b]$ and $1\le \ell < k$, then, for all
\[
\w_\ell(F, t; [a,b]) \le c(k) t^\ell \left( \int_t^{b-a} \frac{\w_k(F, u; [a,b])}{u^{\ell+1}} \, du +   \frac{ \norm{F}{[a,b]}}{(b-a)^\ell} \right) , \quad 0<t\le b-a ,
\]
\end{quote}
implies,
 for $0<t\le x_m-x_0$,
\begin{align} \label{nr}
\omega_{k-1}^g(t) &  :=\omega_{k-1}(g^{(r)},t; [x_0,x_m]) \\ \nonumber
& \le c t^{k-1} \left( \int_t^{x_m-x_0} \frac{\w_k^f(u)}{u^{k}} \, du +   \frac{ \w_k^f(x_m-x_0)}{(x_m-x_0)^{k-1}} \right) \\ \nonumber
& \le c t^{k-1}\int_t^{2(x_m-x_0)}\frac{\omega_{k}^f(u)}{u^k}du.
\end{align}

We also note that \ineq{nr0} implies, in particular, that for all $t\in [x_m-x_0, 2(x_m-x_0)]$,
\be \label{nr1}
\omega_{k-1}^g(t) \le  c \|g^{(r)}\|_{[x_0,x_m]}  \le c \w_k^f(x_m-x_0) \le c \w_k^f(t).
\ee

We now represent the divided difference in the form
\begin{align*}
(x_m-x_0)[x_0,\dots,x_m;f]&=(x_m-x_0)[x_0,\dots,x_m;g] \\
&=[x_1,\dots,x_{m};g] - [x_0,\dots,x_{m-1};g] \\
&=[y_0,\dots,y_{m-1};g] - [x_0,\dots,x_{m-1};g],
\end{align*}
where $y_j :=x_{j+1}$, $0\le j\le m-1$. By the induction hypothesis,
\[
|[x_0,\dots,x_{m-1};g]|\le c\Lambda_{r}(x_0,\dots,x_{m-1};\omega_{k-1}^g)
\]
and
\[
|[y_0,\dots,y_{m-1};g]|\le c\Lambda_{r}(y_0,\dots,y_{m-1};\omega_{k-1}^g).
\]

Now,  taking into account \ineq{nr}, \ineq{nr1}
and homogeneity of $\Lambda_{r}(z_0,\dots,z_{m};\psi)$ with respect to $\psi$,
\lem{lemma4} with $\varphi := \omega_{k-1}^g$ and $\omega := K\omega_{k}^f$, where $K$ is the maximum of constants $c$ in \ineq{nr} and \ineq{nr1},  implies that
\[
\Lambda_{r}(x_0,\dots,x_{m-1};\omega_{k-1}^g)\le c(x_m-x_0)\Lambda_{r}(x_0,\dots,x_{m};\omega_{k}^f)
\]
and
\begin{align*}
\Lambda_{r}(y_0,\dots,y_{m-1};\omega_{k-1}^g) & =  \Lambda_{r}(x_1,\dots,x_{m};\omega_{k-1}^g) \\
& \le c(x_m-x_0)\Lambda_{r}(x_0,\dots,x_{m};\omega_{k}^f),
\end{align*}
which yields \ineq{mainin}.
\end{proof}

\sect{Applications}

Throughout this section,    the set $X = \{x_j\}_{j=0}^{m-1}$ is assumed to be  such that  $x_0 \le  x_1 \le  \dots \le x_{m-1}$ (unless stated otherwise), and denote
$I:= [x_0, x_{m-1}]$ and $|I| = x_{m-1}-x_0$. Also, all constants written in the form    $C(\mu_1, \mu_2, \dots)$ may depend only on parameters $\mu_1$, $\mu_2$, ... and not on anything else.

We first recall that the classical Whitney interpolation inequality can be written in the following  form.

\begin{theorem}[Whitney inequality, \cite{W57}] \label{whth}
Let $r\in\N_0$ and $m\in\N$ be such that $m\ge \max\{r+1, 2\}$, and suppose that a set
$X = \{x_j\}_{j=0}^{m-1}$ is such that
\be \label{cond}
x_{j+1}-x_{j}\ge \lambda |I|,\quad \text{for all }\;  0\le j \le m-2 ,
\ee
where $0<\lambda \le 1$.
If $f\in C^{(r)}(I)$,  then
\[
\big| f(x)-L_{m-1}(x;f;x_0,\dots,x_{m-1}) \big| \le C(m,\lambda) |I|^r\omega_{m-r}(f^{(r)},|I|,I) , \quad x\in I,
\]
where $L_{m-1}(\cdot;f;x_0,\dots,x_{m-1})$ is the (Lagrange) polynomial of degree $\le m-1$ interpolating $f$ at the points in $X$.
\end{theorem}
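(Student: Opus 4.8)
The plan is to deduce \thm{whth} from \thm{main} applied to the $(m+1)$-point set obtained by adjoining the evaluation point $x$ to the interpolation nodes. First I dispose of the trivial case: if $x=x_j$ for some $j$, then $f(x)=L_{m-1}(x;f;x_0,\dots,x_{m-1})$ and there is nothing to prove, so I may assume $x\in(x_0,x_{m-1})$ is distinct from every $x_j$. Let $t_0\le\dots\le t_m$ be the points $x_0,\dots,x_{m-1},x$ arranged in increasing order; they are pairwise distinct, so \ineq{multipl} holds (with the roles of $m$ and $r$ in \thm{main} played by the present $m$ and $r$), and $t_0=x_0$, $t_m=x_{m-1}$, $x=t_{i_0}$ for some $1\le i_0\le m-1$. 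Reading \ineq{diffwh} with $t_{i_0}=x$ as the simple knot gives the error representation
\[
f(x)-L_{m-1}(x;f;x_0,\dots,x_{m-1})=[t_0,\dots,t_m;f]\,P,\qquad P:=\prod_{j=0}^{m-1}|x-x_j|=\prod_{i\ne i_0}|t_{i_0}-t_i|,
\]
and \thm{main} yields $\left|[t_0,\dots,t_m;f]\right|\le c(m)\,\Lambda_r(t_0,\dots,t_m;\omega_k)$, where $k:=m-r$ and $\omega_k(u):=\omega_k(f^{(r)},u;I)$ (note $[t_0,t_m]=I$). It therefore suffices to prove, for every $(p,q)\in\Q_{m,r}$, that $\Lambda_{p,q,r}(t_0,\dots,t_m;\omega_k)\,P\le C(m,\lambda)\,|I|^r\,\omega_k(|I|)$.

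Second, I estimate the numerator of $\Lambda_{p,q,r}$ crudely. Since $\omega_k$ is nondecreasing and $d(p,q)\le 2|I|$, while $\omega_k(2|I|)\le 2^k\omega_k(|I|)$ by the standard property of moduli of smoothness, and since the exponent $p+r-q-1\le-2$, a one-line integration gives
\[
\int_{t_q-t_p}^{d(p,q)}u^{p+r-q-1}\omega_k(u)\,du\le \frac{2^m}{q-p-r}\,\omega_k(|I|)\,(t_q-t_p)^{p+r-q}.
\]
Writing $N:=q-p\;(\ge r+1)$, the claim thus reduces to the purely geometric inequality
\begin{equation}
(t_q-t_p)^{-(N-r)}\,\frac{P}{\prod_{i=0}^{p-1}(t_q-t_i)\prod_{i=q+1}^{m}(t_i-t_p)}\le C(m,\lambda)\,|I|^r, \tag{$\star$}
\end{equation}
whose two sides are homogeneous of degree $r$ in the lengths, so that only the size of the constant is at stake.

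Third — and this is where the separation hypothesis \ineq{cond} enters — I control the denominator $D_{p,q}$ of $(\star)$. Each of its $m-N$ factors is the distance between two nodes whose positions in the list $t_0,\dots,t_m$ differ by at least $N+1\ge 2$; in any such pair at most one entry can be the adjoined point $x$, and the remaining $x_j$'s are $\lambda|I|$-separated by \ineq{cond}, so every factor lies in $[\lambda|I|,|I|]$, whence $(\lambda|I|)^{m-N}\le D_{p,q}\le|I|^{m-N}$. In the generic situation $t_q-t_p\ge\lambda|I|$, combining this with $P\le|I|^m$ gives $(\star)$ at once with $C=\lambda^{-(m-r)}$.

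Fourth, the one delicate point — which I expect to be the main obstacle — is that $t_q-t_p$ can be smaller than $\lambda|I|$, and a short inspection of \ineq{cond} shows this happens exactly when $r=0$, $N=1$, and $x\in\{t_p,t_q\}$, no other configuration producing a short gap. There the dangerous factor $(t_q-t_p)^{-(N-r)}=(t_q-t_p)^{-1}$ is a genuine threat, but $t_q-t_p$ equals $|x-t_i|$ for the node $t_i\in\{t_p,t_q\}$ different from $x$, so it appears verbatim in $P$ and cancels; the surviving product has $m-1$ factors each at most $|I|$, and dividing by $D_{p,q}\ge(\lambda|I|)^{m-1}$ again yields $(\star)$. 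Taking the maximum over $(p,q)\in\Q_{m,r}$ and substituting back into the error representation completes the proof, with $\omega_k(|I|)=\omega_{m-r}(f^{(r)},|I|;I)$ as required.
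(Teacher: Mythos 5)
Your proof is correct, and in substance it is the same mechanism this paper uses — not for \thm{whth} itself, which the paper merely quotes from Whitney with no proof, but for the stronger \thm{maincor}: there too one adjoins the evaluation point $x$ to the node set, represents the error through \ineq{diffwh} with $x$ as a simple knot, applies \thm{main}, and then bounds each $\Lambda_{p,q,r}$ using the separation hypothesis. Your case split parallels the paper's: its Case 2 ($q=p+r+1$ with $x\in[y_p,y_q]$) is precisely where your ``short gap'' can live, and your sharper observation — that for the uniform bound the only dangerous configuration is $r=0$, $N=1$, $x\in\{t_p,t_q\}$, since for $N\ge 2$ at least two of $t_p,\dots,t_q$ are original nodes and hence $t_q-t_p\ge\lambda|I|$, and that in the bad case the short factor $t_q-t_p=|x-x_a|$ cancels verbatim against $P$ — checks out. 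The real difference is precision: you discard the integral immediately via $\int_{t_q-t_p}^{d(p,q)}u^{p+r-q-1}\omega_k(u)\,du\le \frac{2^k}{q-p-r}\,\omega_k(|I|)\,(t_q-t_p)^{p+r-q}$, which is all a uniform estimate needs, whereas the paper keeps the integral and tracks $y_q-y_p\ge|x-x_{\sigma_r}|$ to extract the pointwise bound \ineq{corconcl}, from which the Hermite strengthening \thm{auxcor} follows; your route buys simplicity at the cost of that refinement. One small dividend of arguing directly: your proof covers $m=r+1$ (permitted in \thm{whth} when $r\ge1$, where $\Q_{m,r}=\{(0,m)\}$, the denominator is empty, and $k=1$), a case outside the scope of \thm{auxcor} and \thm{maincor}, which require $m\ge r+2$. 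Your geometric lemma in the third step is stated a bit tersely — for a factor pairing $x$ with a node at $t$-distance at least $2$, the lower bound $\lambda|I|$ comes from the original node strictly between them, not from \ineq{cond} applied to the pair itself — but the claim is true and the verification is immediate, so this is a matter of exposition, not a gap.
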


We emphasize that condition \ineq{cond} implies that the points in the set $X$ in the above theorem are assumed to be sufficiently separated from one another.
A natural question is what happens if condition \ineq{cond} is not satisfied and, moreover, if some of the points in $X$ are allowed to coalesce.  In that case, $L_{m-1}(\cdot ;f;x_0,\dots,x_{m-1})$ is the Hermite polynomial whose derivatives interpolate corresponding derivatives of $f$ at points that have multiplicities more than $1$, and \thm{whth} provides no information on its error of approximation of $f$.

It turns out that one can use \thm{main} to provide an answer to this question and significantly strengthen \thm{whth}. As far as we know the formulation of the following theorem (which is itself a corollary of a more general \thm{maincor} below)  is new and has not appeared anywhere in the literature.

\begin{theorem} \label{auxcor}
Let $r\in\N_0$ and $m\in\N$ be such that $m\ge r+2$,
 and suppose that a set
$X = \{x_j\}_{j=0}^{m-1}$ is such that
\be \label{cond1}
x_{j+r+1}-x_{j}\ge \lambda |I|,\quad \text{for all }\;  0\le j \le m-r-2,
\ee
where $0<\lambda \le 1$.
If $f\in C^{(r)}(I)$,  then
\[
\big|f(x)-L_{m-1}(x;f;x_0,\dots,x_{m-1}) \big|\le C(m,\lambda) |I|^r \omega_{m-r} (f^{(r)},|I|,I),    \quad x\in I,
\]
where $L_{m-1}(\cdot ;f;x_0,\dots,x_{m-1})$ is the Hermite polynomial defined in \ineq{hermite} and \ineq{intcond}.
\end{theorem}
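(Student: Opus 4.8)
The plan is to reduce the interpolation error to a single divided difference and then invoke \thm{main}. First I would dispose of the trivial situation: if $x$ coincides with one of the nodes $x_j$ (in particular if $x\in\{x_0,x_{m-1}\}$), then the interpolation conditions \ineq{intcond} force $L_{m-1}(x;f;x_0,\dots,x_{m-1})=f(x)$ and there is nothing to prove. So assume $x_0<x<x_{m-1}$ and $x\ne x_j$ for all $j$, and let $\{z_i\}_{i=0}^m$ be the multiset $\{x_0,\dots,x_{m-1}\}\cup\{x\}$ arranged in nondecreasing order, so that $z_0=x_0$, $z_m=x_{m-1}$, $[z_0,z_m]=I$, and $x=z_{j_*}$ is a simple knot for some $0<j_*<m$. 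Adjoining a distinct simple knot does not change the maximal multiplicity, so \ineq{multipl} continues to hold for $\{z_i\}_{i=0}^m$ with the same $r$. The Newton form of the Hermite polynomial yields the exact identity
\[
f(x)-L_{m-1}(x;f;x_0,\dots,x_{m-1})=[z_0,\dots,z_m;f]\prod_{i=0,\,i\ne j_*}^{m}(z_{j_*}-z_i),
\]
and \thm{main}, applied to $\{z_i\}_{i=0}^m$ (whose top index is $m$ and for which $k=m-r\ge2$), bounds $|[z_0,\dots,z_m;f]|$ by $c(m)\,\Lambda_{r}(z_0,\dots,z_m;\omega)$, where $\omega(u):=\omega_{m-r}(f^{(r)},u,I)$. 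Thus it suffices to show, for every $(p,q)\in\Q_{m,r}$, that
\[
\Lambda_{p,q,r}(z_0,\dots,z_m;\omega)\prod_{i\ne j_*}|z_{j_*}-z_i|\le C(m,\lambda)\,|I|^{r}\,\omega(|I|).
\]

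Next I would strip off the modulus. Since $q-p\ge r+1$ the exponent $p+r-q-1\le-2$ is integrable at infinity, and since the modulus $\omega$ is nondecreasing and constant for $u\ge|I|$, we have $\omega(u)\le\omega(|I|)$ throughout $[z_q-z_p,d(p,q)]\subseteq(0,2|I|]$, whence
\[
\int_{z_q-z_p}^{d(p,q)}u^{p+r-q-1}\omega(u)\,du\le\frac{\omega(|I|)}{q-p-r}\,(z_q-z_p)^{p+r-q}.
\]
After this bound the claim becomes the purely geometric estimate
\[
R:=\frac{(z_q-z_p)^{p+r-q}\,\prod_{i\ne j_*}|z_{j_*}-z_i|}{\prod_{i=0}^{p-1}(z_q-z_i)\,\prod_{i=q+1}^{m}(z_i-z_p)}\le C(m,\lambda)\,|I|^{r}.
\]
A count of powers of length confirms that the two sides carry the same homogeneity: the numerator contributes $m-(q-p-r)$ length factors, the denominator $p+m-q$, and their difference is exactly $r$, so only the constant is at stake.

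This geometric estimate is where the separation hypothesis \ineq{cond1} enters, and it is the main obstacle. The key observation I would record is that, because at most one of any $r+3$ consecutive $z_i$'s is the inserted point $x$, each such window contains $r+2$ consecutive \emph{original} nodes, so \ineq{cond1} upgrades for the enlarged set to $z_{i+r+2}-z_i\ge\lambda|I|$ for $0\le i\le m-r-2$. Applied to the denominator this gives $z_q-z_i\ge\lambda|I|$ for $i\le p-1$ and $z_i-z_p\ge\lambda|I|$ for $i\ge q+1$, so the denominator is at least $(\lambda|I|)^{p+m-q}$. For the numerator I would split on the location of $x$ relative to the window $[z_p,z_q]$: if $p\le j_*\le q$, then the $q-p$ factors $|z_{j_*}-z_i|$ with $i\in[p,q]\setminus\{j_*\}$ are each $\le z_q-z_p$, and since $q-p\ge q-p-r$ these supply enough smallness to absorb $(z_q-z_p)^{p+r-q}$; if $j_*\notin[p,q]$, then $z_p,\dots,z_q$ are $q-p+1\ge r+2$ consecutive original nodes, so $z_q-z_p\ge\lambda|I|$ is itself bounded below and the negative power is harmless. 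In either case power-counting gives $R\le\lambda^{-m}|I|^{r}$, and combining the three steps yields $|f(x)-L_{m-1}(x;f;x_0,\dots,x_{m-1})|\le C(m,\lambda)|I|^{r}\omega_{m-r}(f^{(r)},|I|,I)$. The delicate point to get right is precisely this dichotomy: the very factor $(z_q-z_p)^{p+r-q}$ that is dangerous when $x$ falls inside the window is automatically controlled when $x$ lies outside it, and one must check that the bookkeeping of which gaps are $\gtrsim\lambda|I|$ and which are $\lesssim z_q-z_p$ remains consistent across all $(p,q)\in\Q_{m,r}$.
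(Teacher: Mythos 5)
Your proposal is correct, and its backbone is the same as the paper's: the paper likewise adjoins $x$ to $X$ as a simple knot, uses the identity \ineq{diffwh} to write $f(x)-L_{m-1}(x;f;x_0,\dots,x_{m-1})$ as $[y_0,\dots,y_m;f]$ times a product of distances, invokes \thm{main}, and uses the separation hypothesis \ineq{cond1} to bound the denominator products in $\Lambda_{p,q,r}$ below by powers of $\lambda|I|$; your dichotomy on whether $j_*\in[p,q]$ parallels the paper's Case 1/Case 2 split in the proof of \thm{maincor}. The genuine difference is where the modulus is discarded. The paper never replaces $\w_{m-r}(f^{(r)},\cdot,I)$ by its value at $|I|$: it keeps the integral, tracks the distances from $x$ to its $r+1$ nearest knots, and proves the stronger pointwise estimate \ineq{corconcl} of \thm{maincor}, with the factor $\DD_r(x,X)$ and the Marchaud-type tail $\int_{|x-x_{\sigma_r}|}^{2|I|}t^{-2}\w_{m-r}(f^{(r)},t,I)\,dt$; \thm{auxcor} and \cor{cormaincor} then drop out as immediate corollaries. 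You instead bound $\omega(u)\le\omega(|I|)$ at the outset, which collapses the problem to the purely geometric ratio $R$, settled by power counting. Your supporting claims all check out: adjoining a distinct simple knot preserves \ineq{multipl}; any $r+3$ consecutive $z_i$'s contain $r+2$ consecutive original nodes, giving $z_{i+r+2}-z_i\ge\lambda|I|$; the index ranges needed for the denominator bounds ($p\le m-r-1$, and $p-1\le m-r-2$ resp.\ $p\le m-r-2$ whenever the corresponding products are nonempty) are automatic for $(p,q)\in\Q_{m,r}$; and $z_q>z_p$ whenever $q-p\ge r+1$, so the negative power is finite. So your route buys a shorter, self-contained proof of exactly \thm{auxcor}, at the cost of the pointwise information: it cannot recover \thm{maincor} or \cor{cormaincor}, which is precisely what the paper's extra bookkeeping with the integral preserves.
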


\thm{auxcor} is an immediate corollary of the following more general theorem. Before we state it, we need to introduce the following notation. Given
$X = \{x_j\}_{j=0}^{m-1}$ with  $x_0 \le  x_1 \le  \dots \le x_{m-1}$ and  $x\in [x_0, x_{m-1}]$, we renumber all points $x_j$'s so that their distance from $x$ is nondecreasing. In other words, let $\sigma =(\sigma_0, \dots, \sigma_{m-1})$ be a permutation of   $(0, \dots, m-1)$ such that
\be \label{perm}
|x-x_{\sigma_{\nu-1}}| \le |x-x_{\sigma_{\nu}}|, \quad \text{for all }\; 1\le \nu \le m -1.
\ee
Note that this permutation $\sigma$ depends on $x$ and is not   unique if there are at least two points from $X$ which are equidistant from  $x$. Denote also
\be \label{rdist}
\DD_{r} (x, X) := \prod_{\nu=0}^r |x-x_{\sigma_\nu}| , \quad 0\le r\le m-1.
\ee

\begin{theorem} \label{maincor}
Let $r\in\N_0$ and $m\in\N$ be such that $m\ge r+2$, and suppose that a set
$X = \{x_j\}_{j=0}^{m-1}$ is such that
\be \label{cond2}
x_{j+r+1}-x_{j}\ge \lambda |I|,\quad \text{for all }\;  0\le j \le m-r-2,
\ee
where $0<\lambda \le 1$.
If $f\in C^{(r)}(I)$,  then,  
for each $x\in I$,
\begin{align} \label{corconcl}
\big|f(x)- & L_{m-1}(x;f;x_0,\dots,x_{m-1}) \big| \\ \nonumber
& \le C(m,\lambda) \DD_{r} (x, X)   \int_{|x-x_{\sigma_{r}}|}^{2|I|}\frac{\w_{m-r}(f^{(r)},t,I)}{t^2}dt,
\end{align}
where $\DD_{r} (x, X)$ is defined in \ineq{rdist}, and
 $L_{m-1}(\cdot ;f;x_0,\dots,x_{m-1})$ is the Hermite polynomial defined in \ineq{hermite} and \ineq{intcond}.
\end{theorem}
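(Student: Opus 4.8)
The plan is to deduce \thm{maincor} from \thm{main} by writing the interpolation error as a divided difference and then estimating the resulting quantity $\Lambda_r$ by means of the separation hypothesis \ineq{cond2}. We may assume $x\notin X$, since otherwise $L_{m-1}(x;f;x_0,\dots,x_{m-1})=f(x)$ and the left-hand side of \ineq{corconcl} vanishes. Then $x$ is a simple knot of the enlarged collection $\widetilde X:=X\cup\{x\}=\{z_0\le\dots\le z_m\}$ of $m+1$ points, and \ineq{diffwh} yields
\[
f(x)-L_{m-1}(x;f;x_0,\dots,x_{m-1})=[z_0,\dots,z_m;f]\prod_{j=0}^{m-1}(x-x_j).
\]
Since $x\in I$ one has $z_0=x_0$, $z_m=x_{m-1}$, so $z_m-z_0=|I|$; and since $x$ differs from every $x_j$ while $X$ satisfies \ineq{cond2}, no $r+2$ of the $z_i$ coincide, so $\widetilde X$ satisfies \ineq{multipl}. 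Applying \thm{main} to $\widetilde X$ (which has $m+1$ points, the same $r$, and $k=m-r$) gives $\big|[z_0,\dots,z_m;f]\big|\le c\,\Lambda_r(z_0,\dots,z_m;\w_{m-r})$ with $\w_{m-r}(t):=\w_{m-r}(f^{(r)},t;I)$. It thus remains to prove the geometric estimate
\[
\prod_{j=0}^{m-1}|x-x_j|\;\Lambda_r(z_0,\dots,z_m;\w_{m-r})\le C(m,\lambda)\,\DD_r(x,X)\int_{|x-x_{\sigma_r}|}^{2|I|}\frac{\w_{m-r}(t)}{t^2}\,dt .
\]

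Writing $\rho_\nu:=|x-x_{\sigma_\nu}|$ (so $\rho_0\le\dots\le\rho_{m-1}\le|I|$), one has $\DD_r(x,X)=\prod_{\nu=0}^r\rho_\nu$ by \ineq{rdist} and \ineq{perm}, and $\prod_{j=0}^{m-1}|x-x_j|=\DD_r(x,X)\prod_{\nu=r+1}^{m-1}\rho_\nu\le\DD_r(x,X)\,|I|^{m-1-r}$. The single fact I extract from \ineq{cond2} is that \emph{any interval containing at least $r+2$ of the original points $x_0,\dots,x_{m-1}$ has length at least $\lambda|I|$} (those points are consecutive in index, hence include some pair $x_j,x_{j+r+1}$ with $x_{j+r+1}-x_j\ge\lambda|I|$).

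I then fix $(p,q)\in\Q_{m,r}$, set $a:=q-p-r-1\ge0$ (so the exponent in $\Lambda_{p,q,r}$ is $p+r-q-1=-a-2$), and record three consequences of the displayed fact. First, every denominator factor of $\Lambda_{p,q,r}(z_0,\dots,z_m;\cdot)$ is at least $\lambda|I|$: for $i\le p-1$ the interval $[z_i,z_q]$ holds $q-i+1\ge r+3$ of the $z$'s, hence $\ge r+2$ original points, so $z_q-z_i\ge\lambda|I|$, and symmetrically $z_i-z_p\ge\lambda|I|$ for $i\ge q+1$; thus the denominator is $\ge(\lambda|I|)^{\,m-r-1-a}$. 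Second, $z_q-z_p\ge\lambda\rho_r$: if $x\notin[z_p,z_q]$ then $z_p,\dots,z_q$ are $\ge r+2$ original points, so $z_q-z_p\ge\lambda|I|$; if $x\in[z_p,z_q]$ with $a=0$, then $[z_p,z_q]$ contains $r+1$ original points within distance $z_q-z_p$ of $x$, whence $\rho_r\le z_q-z_p$; and if $a\ge1$ then $[z_p,z_q]$ holds $\ge r+2$ original points, giving again $z_q-z_p\ge\lambda|I|$. Third, the same dichotomy gives $(|I|/(z_q-z_p))^{a}\le\lambda^{-a}\le\lambda^{-m}$, since $a\ge1$ forces $z_q-z_p\ge\lambda|I|$ while $a=0$ makes the factor $1$. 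Now, using $u^{-a-2}\le(z_q-z_p)^{-a}u^{-2}$ for $u\ge z_q-z_p$ and $d(p,q)\le|I|$,
\[
\int_{z_q-z_p}^{d(p,q)}u^{p+r-q-1}\w_{m-r}(u)\,du\le(z_q-z_p)^{-a}\int_{\lambda\rho_r}^{2|I|}\frac{\w_{m-r}(u)}{u^2}\,du\le C(\lambda)\,(z_q-z_p)^{-a}\int_{\rho_r}^{2|I|}\frac{\w_{m-r}(u)}{u^2}\,du,
\]
the last step replacing the lower limit $\lambda\rho_r$ by $\rho_r$ via $\int_{\lambda\rho_r}^{\rho_r}u^{-2}\w_{m-r}(u)\,du\le\w_{m-r}(\rho_r)\tfrac{1-\lambda}{\lambda\rho_r}\le C(\lambda)\int_{\rho_r}^{2\rho_r}u^{-2}\w_{m-r}(u)\,du$. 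Dividing by the denominator and multiplying by $\prod_j|x-x_j|$ gives the prefactor
\[
\frac{\prod_{j}|x-x_j|\,(z_q-z_p)^{-a}}{(\lambda|I|)^{m-r-1-a}}\le\DD_r(x,X)\,\lambda^{-(m-r-1-a)}\Big(\tfrac{|I|}{z_q-z_p}\Big)^{a}\le C(m,\lambda)\,\DD_r(x,X).
\]
Hence $\prod_j|x-x_j|\,\Lambda_{p,q,r}(z_0,\dots,z_m;\w_{m-r})\le C(m,\lambda)\,\DD_r(x,X)\int_{\rho_r}^{2|I|}\w_{m-r}(u)u^{-2}\,du$ for each $(p,q)$, and taking the maximum over $(p,q)\in\Q_{m,r}$ establishes the geometric estimate, and with it \ineq{corconcl}.

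The main obstacle is the geometric bookkeeping of the last step: one must check that \ineq{cond2} controls \emph{simultaneously} the denominator factors, the lower limit of the integral, and the a priori dangerous power $(z_q-z_p)^{-a}$. What makes everything close is that $z_q-z_p$ can be small relative to $|I|$ only when $a=0$ (that is, $q-p=r+1$); the moment $q-p\ge r+2$, the interval $[z_p,z_q]$ is forced by \ineq{cond2} to have length $\ge\lambda|I|$, which at once tames $(z_q-z_p)^{-a}$ and keeps the integral's lower limit comparable to $|x-x_{\sigma_r}|$.
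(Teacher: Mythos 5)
Your proposal is correct and follows essentially the same route as the paper's own proof: insert $x$ as a simple knot, convert the interpolation error to a divided difference via \ineq{diffwh}, apply \thm{main}, bound every denominator factor of $\Lambda_{p,q,r}$ below by $\lambda|I|$ using \ineq{cond2}, and split according to whether $q-p=r+1$ with $x\in[y_p,y_q]$ (your $a=0$ case) or not. The only differences are cosmetic streamlinings --- handling the exponent uniformly through $(z_q-z_p)^{-a}$ and replacing the paper's three subcases (including its cardinality-contradiction argument) by your direct count of $r+1$ original points within distance $z_q-z_p$ of $x$, which gives $|x-x_{\sigma_r}|\le z_q-z_p$ --- all of which check out.
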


Before proving \thm{maincor} we state another corollary.
First, if $k\in\N$ and $\uw(t):=\omega_{k}(f^{(r)},t ; I)$, then $t_2^{-k} \uw(t_2) \le 2^k t_1^{-k} \uw(t_1)$, for   $0<t_1 <t_2$.
Hence, denoting $\omx := |I| \sqrt[k]{ |x-x_{\sigma_{r}}|/|I|}$
and  noting that $|x-x_{\sigma_{r}}| \le \omx \le |I|$, we have, for $k\ge 2$,
\begin{align*}
\int_{|x-x_{\sigma_{r}}|}^{2|I|} \frac{\uw(t)}{t^2} \, dt & = \left(  \int_{|x-x_{\sigma_{r}}|}^{\omx} + \int_{\omx}^{2|I|} \right) \frac{\uw(t)}{t^2} \, dt \\
& \le
\uw(\omx) \int_{|x-x_{\sigma_{r}}|}^{\infty} t^{-2}\, dt  + 2^k \omx^{-k} \uw(\omx) \int_{0}^{2|I|} t^{k-2} \, dt \\
&=
\frac{\uw(\omx)}{|x-x_{\sigma_{r}}|} \left(1 + \frac{2^{2k-1}}{k-1} \right) .
\end{align*}

Therefore, we immediately get the following consequence of \thm{maincor}.

\begin{corollary} \label{cormaincor}
Let $r\in\N_0$ and $m\in\N$ be such that $m\ge r+2$, and suppose that a set
$X = \{x_j\}_{j=0}^{m-1}$ is such that condition \ineq{cond2} is satisfied.

If $f\in C^{(r)}(I)$,  then,  for each $x\in I$,
\begin{align} \label{corconclcor}
\big|f(x)-  L_{m-1}(x;f;x_0,\dots,x_{m-1}) \big|
& \le C(m,\lambda) \DD_{r-1} (x, X)  \w_{m-r}(f^{(r)},\omx,I) \\ \nonumber
&   \le C(m,\lambda) \DD_{r-1} (x, X)  \w_{m-r}(f^{(r)},|I|,I) ,
\end{align}
where $\omx := |I| \big( |x-x_{\sigma_{r}}|/|I|\big)^{1/(m-r)}$.
\end{corollary}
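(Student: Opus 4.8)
The plan is to obtain \cor{cormaincor} as a direct consequence of \thm{maincor} together with the integral estimate derived in the paragraph preceding the corollary. Since the hypothesis $m\ge r+2$ gives $k:=m-r\ge 2$, that estimate applies verbatim with $\uw(t)=\omega_{m-r}(f^{(r)},t;I)$ and yields
\[
\int_{|x-x_{\sigma_{r}}|}^{2|I|}\frac{\uw(t)}{t^2}\,dt\le\frac{\uw(\omx)}{|x-x_{\sigma_{r}}|}\left(1+\frac{2^{2k-1}}{k-1}\right),
\]
with $\omx=|I|\big(|x-x_{\sigma_{r}}|/|I|\big)^{1/(m-r)}$. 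So the first step is simply to substitute this bound into the conclusion \ineq{corconcl} of \thm{maincor}.

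The key algebraic observation that makes the two factors match is that, directly from the definition \ineq{rdist},
\[
\DD_{r}(x,X)=|x-x_{\sigma_{r}}|\,\DD_{r-1}(x,X),
\]
with the convention that the empty product $\DD_{-1}(x,X)$ equals $1$ when $r=0$. Multiplying the displayed integral bound by $\DD_{r}(x,X)$ therefore cancels the factor $|x-x_{\sigma_{r}}|$ in the denominator and produces $\DD_{r-1}(x,X)\,\uw(\omx)$, up to the factor $1+2^{2k-1}/(k-1)$, which depends only on $k$ and hence only on $m$. Absorbing this factor, together with the $C(m,\lambda)$ coming from \thm{maincor}, into a single $C(m,\lambda)$ gives the first inequality in \ineq{corconclcor}. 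The second inequality follows at once from the monotonicity of the modulus $\uw$ and the bound $\omx\le|I|$ noted before the statement, so that $\uw(\omx)\le\uw(|I|)=\omega_{m-r}(f^{(r)},|I|,I)$.

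There is essentially no serious obstacle here: the whole content of the corollary is the split of the integral at the intermediate scale $\omx$, which is already carried out before the statement. The only points requiring a little care are bookkeeping ones: checking that the split uses monotonicity $\uw(t)\le\uw(\omx)$ on $[|x-x_{\sigma_{r}}|,\omx]$ and the comparison $t^{-k}\uw(t)\le 2^{k}\omx^{-k}\uw(\omx)$ on $[\omx,2|I|]$ correctly (the latter being where the exponent $1/(m-r)$ in $\omx$ is calibrated so that $\omx^{k}=|I|^{k-1}|x-x_{\sigma_{r}}|$), confirming that $k\ge 2$, equivalently $m\ge r+2$, is precisely what guarantees convergence of $\int_0^{2|I|}t^{k-2}\,dt$, and handling the empty-product convention in the edge case $r=0$.
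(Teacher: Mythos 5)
Your proposal is correct and matches the paper's own argument exactly: the paper derives \cor{cormaincor} precisely by the split of the integral at the intermediate scale $\omx$ (using monotonicity of $\uw$ on $[|x-x_{\sigma_r}|,\omx]$ and $t^{-k}\uw(t)\le 2^k\omx^{-k}\uw(\omx)$ on $[\omx,2|I|]$, calibrated so that $\omx^k=|I|^{k-1}|x-x_{\sigma_r}|$) carried out in the paragraph before the statement, then substitutes this into \ineq{corconcl} and uses $\DD_{r}(x,X)=|x-x_{\sigma_{r}}|\,\DD_{r-1}(x,X)$. Your bookkeeping remarks, including the empty-product convention for $r=0$ and the role of $k\ge 2$, are all accurate, so nothing further is needed.
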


We are now ready to prove \thm{maincor}.

\begin{proof}[Proof of \thm{maincor}]
We   note that all constants $C$ below may depend only on $m$ and $\lambda$ and are different even if they appear in the same line. It is clear that we can assume that $x$ is different from all $x_j$'s.
So we let  $1\le i\le m-1$ and $x\in (x_{i-1}, x_i)$ be fixed, and  denote
\[
y_j:=
\begin{cases} x_j, \quad &\text{if}\quad 0\le j\le i-1,\\
x, \quad &\text{if}\quad j=i,\\
x_{j-1}, \quad &\text{if}\quad  i+1\le j\le m,
\end{cases}
\]
$Y := \{y_j\}_{j=0}^m$,   $d(Y) := 2(y_m-y_0) = 2(x_{m-1}-x_0) = 2|I|$,    $k:= m-r$, and $\w_k(t) := \w_{k}(f^{(r)},t,[y_0, y_m]) = \w_{k}(f^{(r)},t,I)$.
Condition \ineq{cond2}   implies that  $y_j < y_{j+r+1}$, for all $0\le j \le m-r-1$, and so
we can use \thm{main} to estimate $\big| [y_0, \dots, y_m; f] \big|$.
Now, identity \ineq{diffwh} with $j_* := i$ that yields $y_{j_*} = x$  implies
\begin{align} \label{auxes}
\big|f(x)- & L_{m-1}(x;f;x_0, \dots, x_{m-1}) \big|\\ \nonumber
 &=
\big|f(x)-L_{m-1}(x;f;y_0,\dots, y_{i-1}, y_{i+1}, \dots, y_{m }) \big| \\ \nonumber
 &= \big| [y_0, \dots, y_m; f] \big|  \prod_{j=0, j\ne i}^m |x-y_j| \\ \nonumber
 & \le c    \Lambda_r(y_0, \dots, y_m; \w_k) \prod_{j=0}^{m-1} |x-x_j| \\ \nonumber
 & \le c \DD_r(x, X) |I|^{k-1} \Lambda_r(y_0, \dots, y_m; \w_k) .
\end{align}
We also note that it is possible to show that
$\prod_{j=0}^{m-1} |x-x_j| \ge (\lambda/2)^{k-1} \DD_r(x, X) |I|^{k-1}$, and so the above estimate cannot be improved.

In order to estimate $\Lambda_r$, we  suppose  that $(p,q)\in\Q_{m,r}$ and estimate $\Lambda_{p,q,r}$. Since $q-p\ge r+1$, we have
\[
y_q-y_i \ge y_q-y_{p-1}  \ge y_{p+r+1}-y_{p-1} \ge \lambda |I| ,
\quad \text{for }\; 0\le i \le p-1 ,
\]
and
\[
y_i-y_p \ge y_{q+1}-y_p \ge y_{p+r+2}-y_p \ge \lambda |I|,
\quad \text{for }\; q+1\le i \le m.
\]
Hence,
\be \label{lam}
\Lambda_{p,q,r}(y_0, \dots, y_m; \w_k) \le C |I|^{q-m-p} \int_{y_q-y_p}^{2|I|} u ^{p+r-q-1} \w_k(u) du .
\ee

We  consider the following two cases.

\medskip
{\bf Case 1:} $q\ge p+r+2$, or $q= p+r+1$ and $x\not\in [y_p, y_q]$

It is clear that $y_q-y_p \ge \lambda |I|$, and so it follows from \ineq{lam} that
\[
\Lambda_{p,q,r}(y_0, \dots, y_m; \w_k) \le C |I|^{-k} \w_k(|I|) \le C |I|^{1-k} \int_{|I|}^{2|I|}   \frac{ \w_k(u)}{u^2}  du .
\]

{\bf Case 2:}  $q= p+r+1$ and $x\in [y_p, y_q]$

If $x=y_p$, then  $p=i$, $q=i+r+1$, and
$y_q-y_p = x_{i+r}-x \geq |x-x_{\sigma_r}|$.

If $x=y_q$, then  $q=i$, $p=i-r-1$, and
$y_q-y_p = x -x_{i-r-1}\geq |x-x_{\sigma_r}|$.

If $x\in (y_p, y_q)$, then  
$y_q-y_p = x_{p+r}-x_p$.
Since it is impossible that $|x-x_{\sigma_r}| > \max\{ x-x_p, x_{p+r}-x\}$, for this would imply that
$\{p, \dots, p+r\} \subset \{\sigma_0, \dots, \sigma_{r-1} \}$
 which cannot happen since these sets have cardinalities $r+1$ and $r$, respectively,
  we conclude that
 $|x-x_{\sigma_r}| \le  \max\{ x-x_p, x_{p+r}-x\} \le x_{p+r}-x_p$.  
Thus, in this case, \ineq{lam} implies that
\[
\Lambda_{p,q,r}(y_0, \dots, y_m; \w_k) \le C |I|^{1-k} \int_{|x-x_{\sigma_r}|}^{2|I|}   \frac{ \w_k(u)}{u^2}  du.
\]

Hence,
\[
\Lambda_r(y_0, \dots, y_m; \w_k) \le C |I|^{1-k} \int_{|x-x_{\sigma_r}|}^{2|I|}   \frac{ \w_k(u)}{u^2}  du ,
\]
which together with \ineq{auxes} implies \ineq{corconcl}.
\end{proof}

We state one more corollary to illustrate the power   of \thm{maincor}. Suppose that $Z  = \left\{ z_j\right\}_{j=0}^\mu$ with
  $z_0< z_1 < \dots < z_{\mu}$, and let $X = \{x_j\}_{j=0}^{m-1}$ with $m :=(r+1)(\mu+1)$ be such that  $x_{\nu(r+1)+j} := z_\nu$, for all $0\le \nu \le \mu$ and $0\le j\le r$. In other words,
\[
X = \left\{ \underbrace{z_0, \dots, z_0}_{r+1}, \underbrace{z_1, \dots, z_1}_{r+1}, \dots, \underbrace{z_{\mu}, \dots, z_{\mu}}_{r+1} \right\} .
\]
Now, given $f\in C^{(r)}[z_0, z_\mu]$,  let $\LL (x; f; Z) := L_{m-1}(x, f; x_0, \dots, x_{m-1})$ be the Hermite polynomial of degree $\le m-1 = r\mu+\mu+r$ satisfying
\be \label{her}
\LL^{(j)}(z_\nu; f; Z) = f^{(j)}(z_\nu), \quad \text{for all}\quad  0\le \nu \le \mu  \andd  0\le j\le r .
\ee
Also,
\[
\dist(x, Z) := \min_{0\le j \le \mu} |x-z_j| , \quad x\in \R.
\]

\begin{corollary} \label{onemorecor}
Let $r\in\N_0$ and $\mu\in\N$, and suppose that a set $Z  = \left\{ z_j\right\}_{j=0}^\mu$
 is such that
\[
z_{j+1}-z_j \geq \lambda |I|, \quad \text{for all }\; 0\le j \le \mu-1,
\]
where $0<\lambda \le 1$, $I  := [z_0,z_{\mu}]$ and $|I| := z_{\mu}-z_0$.
If $f\in C^{(r)}(I)$,  then, for each $x\in I$,
\begin{align*}
\big|f(x)-  \LL(x;f;Z) \big| & \le C  \left( \dist(x, Z) \right)^{r+1} \int_{\dist(x, Z)}^{2|I|}\frac{\w_{m-r}(f^{(r)},t,I)}{t^2}dt \\
& \le C  \left( \dist(x, Z) \right)^{r} \w_{m-r}\left(f^{(r)},|I| \left( \dist(x, Z)/|I|\right)^{1/(m-r)},I\right) \\
& \le C  \left( \dist(x, Z) \right)^{r} \w_{m-r}(f^{(r)},|I|,I) ,
\end{align*}
where  $m :=(r+1)(\mu+1)$,  $C=C(m, \lambda)$ and the  polynomial $\LL(\cdot;f;Z)$ of degree $\le m-1$ satisfies \ineq{her}.
\end{corollary}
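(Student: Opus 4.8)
The plan is to obtain this corollary as a direct specialization of \thm{maincor} and \cor{cormaincor} to the multiset $X=\{x_j\}_{j=0}^{m-1}$ in which every knot $z_\nu$ is repeated exactly $r+1$ times, so that $x_{\nu(r+1)+j}=z_\nu$ for $0\le\nu\le\mu$, $0\le j\le r$. For this $X$ the Hermite polynomial $L_{m-1}(\cdot;f;x_0,\dots,x_{m-1})$ coincides with $\LL(\cdot;f;Z)$ by the interpolation conditions \ineq{her}, so the left-hand sides match. The only things that must be checked are that this particular $X$ satisfies the separation hypothesis \ineq{cond2}, and that the geometric quantities $\DD_r(x,X)$, $\DD_{r-1}(x,X)$ and $|x-x_{\sigma_r}|$ occurring in \ineq{corconcl} and \ineq{corconclcor} collapse to powers of $\dist(x,Z)$.

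First I would verify \ineq{cond2}. Since advancing any index by exactly $r+1$ steps moves from a copy of $z_\nu$ to a copy of $z_{\nu+1}$, we have, whenever $x_j=z_\nu$, that $x_{j+r+1}=z_{\nu+1}$, and hence $x_{j+r+1}-x_j=z_{\nu+1}-z_\nu\ge\lambda|I|$. I would also record that $m-r=\mu(r+1)+1\ge 2$, so that $m\ge r+2$ and both \thm{maincor} and \cor{cormaincor} (which needs $m-r\ge2$) are applicable.

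Next I would identify the sorted distances. We may assume $x\notin Z$, the case $x\in Z$ being trivial since then the left-hand side vanishes by \ineq{her}. Let $z_{\nu_0}$ be a nearest knot to $x$, so $|x-z_{\nu_0}|=\dist(x,Z)$. Because $z_{\nu_0}$ occurs $r+1$ times in $X$, there are at least $r+1$ points of $X$ lying at the minimal distance $\dist(x,Z)$ from $x$; consequently, for any permutation $\sigma$ satisfying \ineq{perm}, we have $|x-x_{\sigma_\nu}|=\dist(x,Z)$ for all $0\le\nu\le r$ (this also disposes of the case of equidistant knots, since ties only increase the number of points at minimal distance). Invoking \ineq{rdist}, this gives
\[
\DD_r(x,X)=\left(\dist(x,Z)\right)^{r+1},\quad \DD_{r-1}(x,X)=\left(\dist(x,Z)\right)^{r},\andd |x-x_{\sigma_r}|=\dist(x,Z).
\]

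With these substitutions the three displayed inequalities follow in order: the first is precisely \ineq{corconcl} of \thm{maincor}; the second is \ineq{corconclcor} of \cor{cormaincor}, after noting that $\omx=|I|\,(|x-x_{\sigma_r}|/|I|)^{1/(m-r)}=|I|\,(\dist(x,Z)/|I|)^{1/(m-r)}$; and the third is the monotonicity of $t\mapsto\w_{m-r}(f^{(r)},t,I)$ together with $\omx\le|I|$. The constant is $C=C(m,\lambda)$ throughout. I expect the only genuinely delicate point to be the clean identification of the $r+1$ nearest points of $X$ with the copies of the nearest knot $z_{\nu_0}$; once the separation of the $z_\nu$ and the exclusion of $x\in Z$ secure that identification, everything else is bookkeeping.
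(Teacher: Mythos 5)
Your proposal is correct and follows exactly the route the paper intends: the paper states this result as a direct specialization of Theorem~\ref{maincor} and Corollary~\ref{cormaincor} to the multiset $X$ with each $z_\nu$ repeated $r+1$ times, and you supply precisely the routine verifications left implicit there (condition \ineq{cond2} via $x_{j+r+1}-x_j=z_{\nu+1}-z_\nu$, the bound $m-r=\mu(r+1)+1\ge 2$, and the identification $|x-x_{\sigma_\nu}|=\dist(x,Z)$ for $0\le\nu\le r$, whence $\DD_r(x,X)=(\dist(x,Z))^{r+1}$ and $\DD_{r-1}(x,X)=(\dist(x,Z))^{r}$). No gaps.
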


As a final note, we remark that some of the  results that appeared in the literature follow from the results in this note. For example,
(i) the main theorem in \cite{Gopmz} immediately follows from \cor{onemorecor} with $\mu=1$, $z_0=-1$ and $z_1=1$,
(ii) \cor{cormaincor} is much stronger than the main theorem in \cite{Gop},
(iii) a particular case of \cite{K-sim}*{Lemmas 8 and 9} for $k=0$ follows from \cor{cormaincor},
(iv) several propositions in the unconstrained case in \cite{GLSW} follow from \cor{cormaincor},
(v) \cite{LP}*{Lemma 3.3, Corollaries 3.4-3.6} follow from \cor{cormaincor},
and
(vi) the proof of \cite{kls}*{Lemma 3.1} may be simplified if \cor{cormaincor} is used.

\begin{bibsection}
\begin{biblist}

\bib{dB75}{article}{
   author={de Boor, C.},
   title={How small can one make the derivatives of an interpolating
   function?},
   note={Collection of articles dedicated to G. G. Lorentz on the occasion
   of his sixty-fifth birthday, II},
   journal={J. Approximation Theory},
   volume={13},
   date={1975},
   pages={105--116; erratum, ibid. 14 (1975), 82},
}


\bib{BS}{article}{
      author={Brudnyi, Yu. A.},
   author={Shvartsman, P. A.},
   title={Description of the trace of a function from the generalized Lipschitz space to an arbitrary compact set},
   language={Russian},
   conference={
      title={Studies in the theory of functions of several real variables
      (Russian)},
   },
   book={
      series={Matematika},
      publisher={Yaroslav. Gos. Univ., Yaroslavl'},
   },
   date={1982},
   pages={16--24},
}

\bib{DL}{book}{
author={DeVore, R. A.},
author={Lorentz, G. G.},
title={Constructive approximation},
series={A Series of Comprehensive Studies in Mathematics},
volume={303},
publisher={Springer-Verlag},
place={New York},
date={1993},
pages={x+227},
}

\bib{Dz56}{article}{
   author={Dzyadyk, V. K.},
   title={Continuation of functions satisfying a Lipschitz condition in the
   $L_p$ metric},
   language={Russian},
   journal={Mat. Sb. N.S.},
   volume={40(82)},
   date={1956},
   pages={239--242},
}

\bib{Dz58}{article}{
   author={Dzyadyk, V. K.},
   title={A further strengthening of Jackson's theorem on the approximation
   of continuous functions by ordinary polynomials},
   language={Russian},
   journal={Dokl. Akad. Nauk SSSR},
   volume={121},
   date={1958},
   pages={403--406},
   issn={0002-3264},
}

\bib{Dz}{book}{
   author={Dzyadyk, V. K.},
   title={ Introduction to the Theory of Uniform Approximation of Functions by
Polynomials},
   language={Russian},
   publisher={Izdat. ``Nauka'', Moscow},
   date={1977},
   pages={511},
}

\bib{DS83}{article}{
   author={Dzyadyk, V. K.},
   author={Shevchuk, I. A.},
   title={Continuation of functions which, on an arbitrary set of the line,
   are traces of functions with a given second modulus of continuity},
   language={Russian},
   journal={Izv. Akad. Nauk SSSR Ser. Mat.},
   volume={47},
   date={1983},
   number={2},
   pages={248--267},
}

\bib{DS}{book}{
author={Dzyadyk, V. K.},
author={Shevchuk, I. A.},
title={Theory of Uniform Approximation of Functions by Polynomials},
publisher={Walter de Gruyter},
place={Berlin},
date={2008},
pages={xv+480},
}


\bib{F}{article}{
   author={Frey, T.},
   title={On local best approximation by polynomials. II},
   language={Hungarian},
   journal={Magyar Tud. Akad. Mat. Fiz. Oszt. K\"{o}zl.},
   volume={8},
   date={1958},
   pages={89--112},
}

\bib{Galan}{book}{
   author={Galan, V. D.},
   title={Smooth functions and estimates for derivatives},
   language={Russian},
   note={Candidate of Science Thesis -- Kyiv, Ukraine},
   date={1991},
}

\bib{GLSW}{article}{
   author={Gonska, H. H.},
   author={Leviatan, D.},
   author={Shevchuk, I. A.},
   author={Wenz, H.-J.},
   title={Interpolatory pointwise estimates for polynomial approximation},
   journal={Constr. Approx.},
   volume={16},
   date={2000},
   number={4},
   pages={603--629},
}

\bib{Gopmz}{article}{
   author={Gopengauz, I. E.},
   title={A pointwise error estimate for interpolation with multiple nodes
   at endpoints of an interval},
   language={Russian},
   journal={Mat. Zametki},
   volume={51},
   date={1992},
   number={1},
   pages={55--61},
   translation={
      journal={Math. Notes},
      volume={51},
      date={1992},
      number={1-2},
      pages={36--40},
   },
}

\bib{Gop}{article}{
   author={Gopengauz, I. E.},
   title={Pointwise estimates of the Hermitian interpolation},
   journal={J. Approx. Theory},
   volume={77},
   date={1994},
   number={1},
   pages={31--41},
}

\bib{J}{article}{
   author={Jonsson, A.},
   title={The trace of the Zygmund class $\Lambda_k(R)$ to closed sets and
   interpolating polynomials},
   journal={J. Approx. Theory},
   volume={44},
   date={1985},
   number={1},
   pages={1--13},
}

\bib{K-sim}{article}{
   author={Kopotun, K. A.},
   title={Simultaneous approximation by algebraic polynomials},
   journal={Constr. Approx.},
   volume={12},
   date={1996},
   number={1},
   pages={67--94},
}

\bib{kls}{article}{
   author={Kopotun, K. A.},
   author={Leviatan, D.},
   author={Shevchuk, I. A.},
   title={Interpolatory estimates for convex piecewise polynomial approximation},
   journal={(submitted)},
   eprint={https://arxiv.org/pdf/1811.01087.pdf},
}

\bib{LP}{article}{
   author={Leviatan, D.},
   author={Petrova, I. L.},
   title={Interpolatory estimates in monotone piecewise polynomial
   approximation},
   journal={J. Approx. Theory},
   volume={223},
   date={2017},
   pages={1--8},
      language={Corrigendum, J. Approx. Theory {\bf 228} (2018), 79--80},
}


\bib{Sh84}{article}{
   author={Shevchuk, I. A.},
   title={Extension of functions, which are traces of functions belonging to
   $H_k^\varphi$ on an arbitrary subset of the line},
   language={English, with Russian summary},
   journal={Anal. Math.},
   volume={10},
   date={1984},
   number={3},
   pages={249--273},
}

\bib{Sh84pre}{article}{
   author={Shevchuk, I. A.},
   title={Constructive description of traces of differentiable functions of
   a real variable},
   language={Russian},
   journal={Akad. Nauk Ukrain. SSR Inst. Mat. Preprint},
   date={1984},
   number={19},
   pages={40},
}

 \bib{S}{book}{
   author={Shevchuk, I. A.},
   title={Polynomial approximation and traces of functions continuous on a segment},
   publisher={Naukova Dumka, Kiev},
    language={Russian},
   date={1992}
}

\bib{SZ}{article}{
   author={Shevchuk, I. A.},
   author={Zhelnov, O. D.},
   title={Linear bounded operator for extension of traces of differentiable
   functions on $\R$},
   journal={East J. Approx.},
   volume={10},
   date={2004},
   number={1-2},
   pages={133--158},
}

\bib{Su}{article}{
   author={Subbotin, Ju. N.},
   title={On the connection between finite differences and corresponding
   derivatives},
   language={Russian},
   journal={Trudy Mat. Inst. Steklov.},
   volume={78},
   date={1965},
   pages={24--42},
}

\bib{W}{article}{
   author={Whitney, H.},
   title={Analytic extensions of differentiable functions defined in closed
   sets},
   journal={Trans. Amer. Math. Soc.},
   volume={36},
   date={1934},
   number={1},
   pages={63--89},
}

\bib{W-diff}{article}{
   author={Whitney, H.},
   title={Differentiable functions defined in closed sets. I},
   journal={Trans. Amer. Math. Soc.},
   volume={36},
   date={1934},
   number={2},
   pages={369--387},
}

\bib{W57}{article}{
author={Whitney, H.},
title={On functions with bounded $n^{th}$ differences},
journal={J. Math. Pure et Appl.},
volume={36},
date={1957},
pages={67--95}
}

\end{biblist}
\end{bibsection}

\end{document}